\documentclass[12pt]{amsart}
\usepackage{amsmath,amscd,amssymb}
\usepackage{latexsym}
\usepackage{dsfont}
\usepackage{color}

\textheight=24cm
\textwidth = 6.375 true in
\topmargin=-10mm
\oddsidemargin=-1mm
\evensidemargin=-1mm
\hfuzz=4pt

\def\BB{{\mathcal B}}
\def\cA{{\mathcal A}}

\def\al{\alpha}
\def\e{\varepsilon}
\def\si{\sigma}
\def\la{\lambda}

\newcommand{\NN}{\mathds{N}}
\newcommand{\TT}{\mathds{T}}
\newcommand{\ZZ}{\mathds{Z}}
\newcommand{\RR}{\mathds{R}}
\newcommand{\CC}{\mathds{C}}

\def\card{{\rm card}\,}
\def\dens{{\rm dens}\,}
\newcommand{\RRe}{\mathop{\rm Re}}

\newtheorem{theorem}{Theorem}[section]
\newtheorem{proposition}[theorem]{Proposition}
\newtheorem{lemma}[theorem]{Lemma}
\newtheorem{corollary}[theorem]{Corollary}

\theoremstyle{definition}

\newtheorem{definition}[theorem]{Definition}

\newtheorem{remark}[theorem]{Remark}

\newtheorem{example}[theorem]{Example}


\newcommand{\aveN}{\frac1N \sum_{n=1}^N}
\newcommand{\aveM}{\frac1M \sum_{n=1}^M}
\newcommand{\limaveN}{\lim_{N\to\infty}\frac1N \sum_{n=1}^N}

\begin{document}
\title{Power bounded operators and the mean ergodic theorem for subsequences}

\author{Tanja Eisner and Vladimir M\"uller}

\address{
Institute of Mathematics, University of Leipzig, Germany}
\email{eisner@math.uni-leipzig.de}

\address{Institute of Mathematics,
Czech Academy of Sciences, 
\v Zitna 25, Prague,
 Czech Republic}
\email{muller@math.cas.cz}

\subjclass{Primary 47A35}
\keywords{Mean ergodic theorem for subsequences, power bounded operators, Hardy functions, weighted ergodic theorem}
\thanks{This work was partially done during the second author's stay at the University of
Leipzig supported by the Alexander von Humboldt Foundation,
Germany. 
The second author was also supported by grant No. 20-31529X of GA CR and RVO:67985840.}

\maketitle

\centerline{\small \emph{Dedicated to the memory of Michael Boshernitzan}}

\begin{abstract}
Let $T$ be a power bounded Hilbert space operator without unimodular eigenvalues. 
We show that the subsequential ergodic 
 averages $N^{-1}\sum_{n=1}^N T^{a_n}$ converge in the strong operator topology for a wide range of sequences $(a_n)$, including the integer part of most of subpolynomial Hardy functions. Moreover,  we show that the weighted averages $N^{-1}\sum_{n=1}^N e^{2\pi i g(n)}T^{a_n}$ also converge for many reasonable functions $g$. In particular, we 
generalize
the polynomial mean ergodic theorem for power bounded operators due to ter Elst and the second author \cite{tEM} to real polynomials and polynomial weights.
\end{abstract}

\section{Introduction}

By the well-known mean ergodic theorem, the Ces\`aro averages $N^{-1}\sum_{n=1}^N T^n$ converge in the strong operator topology as $N\to\infty$ for any power bounded operator $T$ on a reflexive Banach space. Moreover, the limit operator is the projection onto $\ker (I-T)$ along $\overline{{\rm ran}\,(I-T)}$.

Let $(a_n)$ be a strictly increasing sequence of positive integers. The problem whether it is possible to replace the Ces\`aro averages with respect to the full sequence $(T^n)$ of all powers of $T$ by the
subsequence $(T^{a_n})$ has been studied intensely.

There are many results for unitary operators or Hilbert space contractions. The following characterization was proved in \cite{BE} and \cite{LOT}, where  SOT abbreviats the strong operator topology.

\begin{theorem}\label{thm:subseq-charac-numbers}
Let $(a_n)$ be a strictly increasing sequence of positive integers. Then the following statements are equivalent:
\begin{itemize}
\item[(i)] $(SOT)-\lim_{N\to\infty}N^{-1}\sum_{n=1}^N T^{a_n}$ exists for all Hilbert space contractions $T$;

\item[(ii)] $(SOT)-\lim_{N\to\infty}N^{-1}\sum_{n=1}^N T^{a_n}$ exists for all unitary operators $T$;

\item[(iii)] $\lim_{N\to\infty}N^{-1}\sum_{n=1}^N \la^{a_n}$ exists for all complex numbers $\la$, $|\la|=1$.
\end{itemize}
\end{theorem}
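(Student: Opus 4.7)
The plan is to prove the chain of implications (i)$\Rightarrow$(ii)$\Rightarrow$(iii)$\Rightarrow$(ii)$\Rightarrow$(i), with the two nontrivial directions being (iii)$\Rightarrow$(ii), handled by spectral calculus, and (ii)$\Rightarrow$(i), handled by a unitary dilation.

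The implication (i)$\Rightarrow$(ii) is immediate since every unitary operator is a contraction. For (ii)$\Rightarrow$(iii), I would apply (ii) to the one-dimensional unitary $T\colon \CC\to\CC$ defined by $Tz=\la z$ for an arbitrary $\la\in\TT$; then $N^{-1}\sum_{n=1}^N T^{a_n}$ is multiplication by $N^{-1}\sum_{n=1}^N \la^{a_n}$, and SOT-convergence is precisely convergence of the scalar averages.

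For (iii)$\Rightarrow$(ii), let $T$ be unitary on a Hilbert space $H$ and use the spectral theorem to write $T=\int_{\TT}\la\,dE(\la)$ for a projection-valued measure $E$ on $\TT$. Then
\[
\frac{1}{N}\sum_{n=1}^N T^{a_n}=\int_{\TT}\Bigl(\frac{1}{N}\sum_{n=1}^N\la^{a_n}\Bigr)dE(\la).
\]
By hypothesis the integrand converges pointwise on $\TT$ to some bounded function $f$ (bounded by $1$), and since the partial averages are uniformly bounded by $1$, dominated convergence for the scalar spectral measures $\langle E(\cdot)x,x\rangle$ gives convergence of $\bigl\langle N^{-1}\sum T^{a_n}x,x\bigr\rangle$ for each $x\in H$. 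Combined with the uniform bound $\|N^{-1}\sum T^{a_n}\|\le 1$, a standard polarization-and-approximation argument yields convergence in SOT.

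For (ii)$\Rightarrow$(i), I would invoke the Sz.-Nagy dilation theorem: every Hilbert space contraction $T$ on $H$ admits a unitary dilation $U$ on some $K\supseteq H$ satisfying $T^k=P_H U^k|_H$ for all $k\ge 0$. Since the $a_n$ are positive integers, this gives
\[
\frac{1}{N}\sum_{n=1}^N T^{a_n}=P_H\Bigl(\frac{1}{N}\sum_{n=1}^N U^{a_n}\Bigr)\Big|_H,
\]
and the right-hand side converges in SOT by (ii), so the left-hand side does too. The main obstacle is really the (iii)$\Rightarrow$(ii) step: one must make sure the interchange of limit and spectral integral is justified on each vector $x$, using the finiteness of the scalar measure $\langle E(\cdot)x,x\rangle$ together with the uniform bound $|N^{-1}\sum\la^{a_n}|\le 1$; the dilation step in (ii)$\Rightarrow$(i) is then a clean consequence of Sz.-Nagy.
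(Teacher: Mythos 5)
Your overall route is exactly the one the paper indicates: the paper does not prove Theorem \ref{thm:subseq-charac-numbers} itself but cites \cite{BE} and \cite{LOT} and states explicitly that (ii)$\Leftrightarrow$(iii) rests on the spectral theory of unitary operators and (i)$\Leftrightarrow$(ii) on dilation theory, which is precisely your spectral-calculus step and your Sz.-Nagy dilation step. The implications (i)$\Rightarrow$(ii), (ii)$\Rightarrow$(iii) and (ii)$\Rightarrow$(i) are fine as written.

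The one place you need to tighten is the conclusion of (iii)$\Rightarrow$(ii). You establish convergence of the quadratic forms $\langle A_N x,x\rangle$ (hence, by polarization, weak operator convergence) together with the uniform bound $\|A_N\|\le 1$, and then assert that a ``polarization-and-approximation argument'' yields SOT convergence. That inference is false in general: weak operator convergence plus uniform boundedness does not imply strong convergence (powers of a unilateral shift converge weakly to $0$ but not strongly). The fix stays entirely inside your framework: writing $A_N(\la)=N^{-1}\sum_{n=1}^N\la^{a_n}$ and $f=\lim_N A_N$ (a bounded Borel function, so $f(T)$ is defined by the functional calculus), compute directly
$$
\Bigl\|\frac1N\sum_{n=1}^N T^{a_n}x-f(T)x\Bigr\|^2=\int_{\TT}|A_N(\la)-f(\la)|^2\,d\langle E(\la)x,x\rangle,
$$
which tends to $0$ by dominated convergence (the integrand is bounded by $4$ and the measure $\langle E(\cdot)x,x\rangle$ is finite). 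Alternatively, note that the same dominated-convergence argument gives $\|A_N(T)x\|\to\|f(T)x\|$, and weak convergence plus convergence of norms implies strong convergence in Hilbert space. With that repair the proof is complete.
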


\noindent The equivalence (ii)$\Leftrightarrow$(iii) is based on the spectral theory of unitary operators, while the equivalence (i)$\Leftrightarrow$(ii) is based on the dilation theory.

Another tool for proving the mean ergodic type results for subsequences is the 
van der Corput lemma, see \cite[p.~184]{EW}.
All of these methods enable to prove the convergence of the averages
$N^{-1}\sum_{n=1}^N T^{a_n}$ for many reasonable sequences $(a_n)$ and for all contractions on Hilbert spaces, see \cite{BLRT}, \cite{BL}.

A different generalization of the mean ergodic theorem are weighted ergodic theo\-rems where one studies strong convergence of weighted averages $N^{-1}\sum_{n=1}^N c_nT^{n}$ for a given sequence of weights $(c_n)\subset\CC$. Combining weighted ergodic averages with the ergodic averages along subsequences we arrive at the mixed type of ergodic averages of the form $N^{-1}\sum_{n=1}^N c_nT^{a_n}$ with $(c_n)\subset\CC$ and $(a_n)$ being a subsequence of $\NN$. Analogously to Theorem \ref{thm:subseq-charac-numbers} one easily obtains the following characterization for convergence of such averages for contractions on Hilbert spaces, cf.~\cite{BLRT}.
\begin{theorem}\label{thm:subseq-weights-charac-numbers}
Let $(a_n)$ be a strictly increasing sequence of positive integers and $(c_n)\subset\CC$ be bounded (or, more generally, satisfy $\sup_{N\in\NN}N^{-1}\sum_{n=1}^N |c_n|<\infty$). Then the following statements are equivalent:
\begin{itemize}
\item[(i)] $(SOT)-\lim_{N\to\infty}N^{-1}\sum_{n=1}^N c_n T^{a_n}$ exists for all Hilbert space contractions $T$;

\item[(ii)] $(SOT)-\lim_{N\to\infty}N^{-1}\sum_{n=1}^N c_n T^{a_n}$ exists for all unitary operators $T$;

\item[(iii)] $\lim_{N\to\infty}N^{-1}\sum_{n=1}^N c_n \la^{a_n}$ exists for all complex numbers $\la$, $|\la|=1$.
\end{itemize}
\end{theorem}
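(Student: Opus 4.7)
The plan is to follow the same three-step pattern as in Theorem \ref{thm:subseq-charac-numbers}, treating the weights $(c_n)$ essentially as coefficients to which the spectral and dilation machinery applies verbatim. The implication (i)$\Rightarrow$(ii) is immediate, and (ii)$\Rightarrow$(iii) is obtained by specializing to the one-dimensional unitary $T=\la I$ on $\CC$ (with $|\la|=1$) and evaluating on the vector $1$, so that the operator averages reduce to the scalar averages in (iii).

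For (iii)$\Rightarrow$(ii), I would invoke the spectral theorem for unitary operators: write $T=\int_{\TT}z\,dE(z)$ so that
\[
\frac1N\sum_{n=1}^N c_n T^{a_n}\;=\;\int_{\TT} f_N(z)\,dE(z),\qquad f_N(z):=\frac1N\sum_{n=1}^N c_n z^{a_n}.
\]
The hypothesis $M:=\sup_N N^{-1}\sum_{n=1}^N|c_n|<\infty$ gives $\|f_N\|_\infty\le M$ uniformly, and (iii) gives pointwise convergence $f_N(z)\to f(z)$ on $\TT$. For any $x\in H$ the spectral measure $\mu_x=\langle E(\cdot)x,x\rangle$ is finite, so
\[
\Bigl\|\Bigl(\tfrac1N\sum c_n T^{a_n}-\int f\,dE\Bigr)x\Bigr\|^{2}=\int_{\TT}|f_N-f|^{2}\,d\mu_x\longrightarrow 0
\]
by the dominated convergence theorem (with dominant $(2M)^2$). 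This yields SOT convergence.

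For (ii)$\Rightarrow$(i), I would use the Sz.-Nagy unitary dilation: given a contraction $T$ on $H$, there exist a larger Hilbert space $K\supset H$ and a unitary $U$ on $K$ with $T^n=P_HU^n|_H$ for all $n\ge 0$, where $P_H$ is the orthogonal projection onto $H$. Therefore
\[
\frac1N\sum_{n=1}^N c_n T^{a_n}\;=\;P_H\Bigl(\frac1N\sum_{n=1}^N c_n U^{a_n}\Bigr)\Big|_{H},
\]
and SOT convergence for $U$ (granted by (ii), applied on $K$) passes to $T$ after multiplication by the fixed bounded operators $P_H$ and the inclusion $H\hookrightarrow K$.

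There is no genuine obstacle beyond the one already handled in Theorem \ref{thm:subseq-charac-numbers}: the only place the weights enter nontrivially is in the dominated-convergence step of (iii)$\Rightarrow$(ii), and the assumption $\sup_N N^{-1}\sum_{n=1}^N|c_n|<\infty$ is tailored precisely to guarantee the uniform $L^{\infty}(\TT)$-bound on $f_N$ that legitimizes this step.
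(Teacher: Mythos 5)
Your proof is correct and follows exactly the route the paper indicates (the paper itself omits the proof, noting only that (ii)$\Leftrightarrow$(iii) rests on the spectral theorem for unitaries and (i)$\Leftrightarrow$(ii) on Sz.-Nagy dilation, citing \cite{BLRT}). Your dominated-convergence argument with the uniform bound $\|f_N\|_\infty\le M$ is precisely the point where the hypothesis $\sup_N N^{-1}\sum_{n=1}^N|c_n|<\infty$ is needed, and you have handled it correctly.
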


\noindent Note that 
the most natural class of 
weights are unimodular ones, i.e., of the form $c_n=e^{2\pi i g(n)}$, $n\in\NN$, for some $g:\NN\to\RR$. 
\smallskip 

All the above mentioned methods do not work for power bounded operators, which form a natural class from the point of view of the mean ergodic theorem.
It is worth to point out that power bounded operators on Hilbert spaces have very different properties from contractions. The study of power bounded operators and their relations to contractions has a long history. For main results see \cite{SzN}, \cite{Fo}, \cite{H}, \cite{P}.


In \cite{tEM}, the strong convergence of the averages
$\frac{1}{N}\sum_{n=1}^N T^{p(n)}$ 
was proved for all power bounded Hilbert space operators $T$ and all polynomials $p$ 
satisfying $p(\NN)\subset\NN$, which was the only known mean ergodic theorem along a non-trivial subsequence for power bounded operators. The present paper is an attempt to fill this gap, also regarding weighted ergodic theorems along subsequences.   
%

We extend the results of \cite{tEM} 
to a wide range of sequences $(a_n)$ of subpolynomial growth, e.g. $a_n=\Bigl[\sum_{j=0}^k c_jn^{\al_j}\Bigr]$, where  $c_0,\dots,c_k,\al_0,\dots,\al_k\in\RR$, $c_0>0$, $\al_0>\max\{0,\al_1,\dots,\al_k\}$ and $[\cdot]$ denotes the integer part, or $a_n=[n^\al\ln^\beta n]$, $\al>0, \al\notin\NN$, $\beta\in\RR$, see Theorem \ref{thm:subseq}.
Moreover, we prove also the strong convergence of the Ces\`aro averages 
$$
\aveN e^{2\pi i g(n)}T^{a_n}
$$ 
for many natural functions $g$ including  real polynomials, see Theorems \ref{thm:main-weighted}, \ref{thm:weighted-pol} and Corollary \ref{cor:weighted}. Our main examples will be again (large classes of) Hardy functions. In particular, we generalize the result of ter Elst, M\"uller [tEM] to real polynomials and polynomial weights, see Corollary \ref{cor:pol}.

Our investigations are inspired by ergodic theory where subsequential and weighted ergodic theorems have been active areas of research for many years with connections to other areas of mathematics such as harmonic analysis and number theory, see, e.g., \cite[Chapter 21]{EFHN}, \cite{Bou},\cite{W},\cite{N},\cite{RW},\cite{BM},\cite{A},\cite{L},\cite{EK},\cite{S},
\cite{GT}.


\medskip 

Let $T$ be a power bounded operator on a reflexive Banach space $X$.
By the Jacobs-Glicksberg-deLeeuw theorem, see, e.g., \cite[Thm.~I.1.5]{E},
there is a decomposition $X=X_1\oplus X_2$, where 
\begin{eqnarray*}
X_1&=&\overline{\text{lin}}\{x\in X:\, Tx=\lambda x\text{ for some }\lambda\in\TT\},\\
X_2&=&\left\{x\in X:\, 0\in\overline{\{T^nx,n\in\NN\}}^{\text{weak}}\right\},
\end{eqnarray*}
$\TT$ denoting the unit circle.
It is 
easy to see that the strong convergence of the Ces\`aro averages $N^{-1}\sum_{n=1}^N (T|_{X_1})^{a_n}$ is equivalent to the convergence of $N^{-1}\sum_{n=1}^N \la^{a_n}$ for all $\la\in\si_p(T)\cap\TT$. Moreover, an analogous characterization holds for strong convergence of weighted averages  $N^{-1}\sum_{n=1}^Ne^{2\pi i g(n)} (T|_{X_1})^{a_n}$.
So strong convergence of subsequential/weighted ergodic averages
of the operator 
$T|_{X_1}$ restricts to the same condition 
as in Theorem \ref{thm:subseq-charac-numbers}(iii) or Theorem \ref{thm:subseq-weights-charac-numbers}(iii), respectively, for all unimodular eigenvalues $\lambda$ of $T$. 
In this paper 
we mostly
concentrate on the operator $T|_{X_2}$. 
Thus we 
assume that our power bounded operator $T$ has no peripheral point spectrum, $\si_p(T)\cap\TT=\emptyset$.

\textbf{Acknowledgment.} We thank the referee for 
inspiring comments. 

\section{Functions of subpolynomial growth}

Denote by $B$ the set of all germs at $+\infty$ of continuous real functions of  real variable $t$.
So the elements of $B$ are continuous functions defined on an interval $[t_0,\infty)$; we identify two such functions if they are equal for all $t$ large enough.

Let $f,g\in B$. We write $f\ll g$ if $f(t)<g(t)$ for all $t$ large enough.

\begin{definition}\label{defPm}
Let $m\in\NN$. We say that a function $f\in B$ is of class $P_m$ if  
$f$ has 
continuous derivatives $f', f'',\dots,f^{(m)}$, $f,f',\dots, f^{(m)}\gg 0$, 
$$
\limsup_{t\to\infty}\frac{f^{(m-1)}(t)}{t\,f^{(m)}(t)}<\infty
$$ 
and
$$
\limsup_{t\to\infty}\sup\Bigl\{\frac{f^{(m)}(s)}{f^{(m)}(t)}: s\ge t\Bigr\}<\infty.
$$
\end{definition}

Note that the last condition is satisfied if either
$f^{(m)}$ is decreasing,
or $\lim_{t\to\infty}f^{(m)}(t)$ exists and is positive.

Typical functions satisfying conditions of Definition \ref{defPm} are real polynomials of degree $m$ with positive leading coefficient, $f(t)=t^\al\quad(m-1<\al\le m)$ or $f(t)=t^\al\ln^\beta t\quad(m-1<\al<m,\beta\in\RR)$. For more examples see Section \ref{sec:hardy} below.

The following lemmas describe properties of functions of class $P_1$.

\begin{lemma}\label{sublinear}
Let $f$ be a function of class $P_1$.
Then:
\begin{itemize}
\item[(i)]
$\lim_{t\to\infty}f(t)=\infty$;

\item[(ii)]
$\limsup_{t\to\infty}\frac{f(t)}{t}<\infty$;

\item[(iii)]
$\limsup_{t\to\infty}\frac{f(2t)}{f(t)}<\infty$;

\item[(iv)]
$\liminf_{t\to\infty}\frac{f(t)}{tf'(t)}>0$.
\end{itemize}
\end{lemma}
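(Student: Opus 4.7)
The plan is to exploit the two hypotheses of Definition \ref{defPm} (with $m=1$) separately. Fix constants $M$ and $C$ and a threshold $t_0$ such that, for all $t \ge t_0$, one has $f(t)/(tf'(t)) \le M$ and $f'(s)/f'(t) \le C$ for every $s \ge t$.

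First I would settle (ii): taking $t = t_0$ in the derivative-ratio condition shows $f'$ is bounded on $[t_0, \infty)$ by $K := Cf'(t_0)$, so $f(t) \le f(t_0) + K(t - t_0)$ and $\limsup f(t)/t \le K < \infty$.

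The crux is (iv). The key idea is to read the derivative-ratio condition in reverse: for $t_0 \le s \le t$, applying the condition at $s$ yields $f'(t) \le Cf'(s)$, that is, $f'(s) \ge f'(t)/C$. Integrating over $[t/2, t]$ for $t \ge 2t_0$, and using that $f(t/2) > 0$, gives
\[
f(t) \ge f(t) - f(t/2) = \int_{t/2}^{t} f'(s)\,ds \ge \frac{t}{2}\cdot\frac{f'(t)}{C},
\]
so $\liminf_{t\to\infty} f(t)/(tf'(t)) \ge 1/(2C) > 0$. From (iv), part (iii) follows at once: $f(2t) - f(t) = \int_t^{2t} f'(s)\,ds \le Ctf'(t)$, and (iv) rearranged gives $tf'(t) \le 2Cf(t)$ eventually, so $f(2t)/f(t) \le 1 + 2C^2$. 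Finally, (i) follows from the other hypothesis alone: $f(t) \le Mtf'(t)$ is $(\log f)'(t) \ge 1/(Mt)$, integration of which yields $f(t) \ge f(t_0)(t/t_0)^{1/M} \to \infty$.

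The main obstacle I anticipate is part (iv). The derivative-ratio hypothesis is phrased for $s \ge t$, and the trick is to realise that, read symmetrically at the smaller point, it also supplies a \emph{lower} bound on $f'(s)$ in terms of $f'(t)$ when $s \le t$. The remaining choice is the integration window: taking $[t/2, t]$ rather than $[t_0, t]$ keeps the lower bound on $f'$ uniform on the interval and produces the clean linear-in-$t$ estimate needed to beat $tf'(t)$.
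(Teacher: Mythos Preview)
Your proof is correct. The logical order and the key estimates differ from the paper's, and the comparison is worth noting.

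For (i) and (ii) you do essentially what the paper does (the paper's (i) is your $(\log f)' \ge 1/(Mt)$ argument written out as an integral inequality rather than as a logarithmic derivative). The genuine difference is in (iii) and (iv). The paper proves (iv) by integrating $f'$ over the \emph{full} interval $[t_0,t]$, obtaining $f(t) \ge f(t_0) + c'^{-1}(t-t_0)f'(t)$; to pass from this to $\liminf f(t)/(tf'(t)) > 0$ it then has to invoke (i) to kill the lower-order terms. Your choice of the window $[t/2,t]$ sidesteps this: the interval has length comparable to $t$, the derivative lower bound $f'(s)\ge f'(t)/C$ is uniform on it, and the positivity of $f(t/2)$ lets you discard the boundary term outright, so (iv) becomes independent of (i). For (iii) the paper uses the substitution $s\mapsto 2s$ in the integral for $f(2t)$ together with the derivative-ratio bound and again appeals to (i); you instead bound $f(2t)-f(t)$ directly by $Ctf'(t)$ and feed in (iv), which is shorter and avoids (i) once more. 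In effect the paper's dependency graph is (i)$\Rightarrow$(iii),(iv), whereas yours is (iv)$\Rightarrow$(iii), with (i) standing alone. Both routes are elementary; yours is a little tighter in that the doubling estimates (iii),(iv) do not rest on the divergence statement (i).
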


\begin{proof}
Let $c,c'>0$ and $t_0$ satisfy that $f(t)>0$ and $f'(t)>0$ for $t\ge t_0$,
$\frac{f(t)}{tf'(t)}\le c\quad(t\ge t_0)$ and $f'(s)\le c'f'(t)\quad(t_0\le t\le s)$.

(i)
For $t\ge t_0$ we have
$$
f(t)= f(t_0)+\int_{t_0}^t f'(s) ds\ge
\int_{t_0}^t \frac{f(s)}{cs}ds\ge
\frac{f(t_0)}{c}\int_{t_0}^t\frac{ds}{s} =\frac{f(t_0)}{c}\ln\frac{t}{t_0}\to\infty
$$
as $t\to\infty$.
\smallskip

(ii) For $t\ge t_0$ we have
$$
\frac{f(t)}{t}\le cf'(t)\le cc'f'(t_0).
$$
So $\limsup_{t\to\infty}\frac{f(t)}{t}<\infty$.
\smallskip

(iii)
For $t\ge t_0$ we have
$$
f(t)=f(t_0)+\int_{t_0}^t f'(s)ds
$$
and
$$
f(2t)=f(2t_0)+\int_{2t_0}^{2t} f'(u)du
=f(2t_0)+2\int_{t_0}^{t} f'(2s)ds\le
f(2t_0)+2c'\int_{t_0}^t f'(s)ds
$$
$$
=
f(2t_0)+2c'(f(t)-f(t_0)).
$$
Hence by (i),
$$
\frac{f(2t)}{f(t)}\le
\frac{f(2t_0)}{f(t)}+2c'-\frac{2c'f(t_0)}{f(t)}\to 2c'<\infty
$$
as $t\to\infty$.
\smallskip

(iv)
For $t\ge t_0$ we have
$$
f(t)=f(t_0)+\int_{t_0}^t f'(s)ds\ge
f(t_0)+\frac{1}{c'}(t-t_0)f'(t).
$$
So
$$
tf'(t)\le
c'\bigl(f(t)-f(t_0)\bigr)+t_0f'(t)
$$
and by (i),
$$
\frac{tf'(t)}{f(t)}\le c'-\frac{c'f(t_0)}{f(t)}+t_0\frac{f'(t_0)}{f(t)}\to c'<\infty.
$$
Hence
$\liminf_{t\to\infty}\frac{f(t)}{tf'(t)}\ge c^{'-1}>0$.

\end{proof}

\begin{lemma}\label{sublinear2}
Let $f$ be a function of class $P_1$.
For $k\in\NN$ let 
$$
b_k=\min\bigl\{n\in\NN: f \hbox{ is defined and non-decreasing for } t\ge n \hbox{ and }f(n)\ge k\bigr\}.
$$
Then:
\begin{itemize}
\item[(i)]
$\lim_{k\to\infty}b_k=\infty$;

\item[(ii)]
$\limsup_{k\in\NN}\frac{k}{b_k}<\infty$;

\item[(iii)]
$$
\sup_{k\in\NN}\frac{k(b_{k+1}-b_k)}{b_k} <\infty;
$$
Consequently, $\lim_{k\to\infty}\frac{b_{k+1}}{b_k}=1.$

\item[(iv)]
$\limsup_{j\to\infty}\sup_{k\ge j} \frac{d_j}{d_k+1}<\infty$, where $d_j=b_{j+1}-b_j$. So
$\sup\Bigl\{\frac{d_j}{d_k+1}: j\le k\Bigr\}<\infty$.
\end{itemize}
\end{lemma}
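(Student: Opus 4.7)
The plan is to distill from Definition~\ref{defPm} (with $m=1$) two quantitative constants that drive everything. I will fix $t_0$ large enough so that, on $[t_0,\infty)$: $f,f'>0$, $f\in C^1$, $f(t)\le C_1 t f'(t)$, and $f'(s)\le C_2 f'(t)$ for all $t_0\le t\le s$; in particular $f'\le M':=C_2 f'(t_0)$. By Lemma~\ref{sublinear}(i), $f\to\infty$, so $b_k$ is well defined for all $k$. Part~(i) then follows immediately: if $(b_k)$ were bounded it would take values in a finite subset of integers $n\ge t_0$, forcing $f(b_k)$ to stay bounded and contradicting $f(b_k)\ge k\to\infty$. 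For (ii), Lemma~\ref{sublinear}(ii) provides $C$ with $f(t)\le Ct$ eventually, and $k\le f(b_k)\le C b_k$ finishes it.

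For (iii) I will combine the minimality of $b_k$ with the mean value theorem. For large $k$, minimality gives $f(b_k)\ge k$ and $f(b_{k+1}-1)<k+1$; when $d_k\ge 2$, MVT on $[b_k,b_{k+1}-1]$ yields $\xi_k$ with $f'(\xi_k)(d_k-1)<1$, and $f'(\xi_k)\ge f(\xi_k)/(C_1\xi_k)\ge k/(C_1 b_{k+1})$ gives $d_k\le 1+C_1 b_{k+1}/k$ (the bound is trivial when $d_k\le 1$). Substituting $b_{k+1}=b_k+d_k$ and rearranging, $b_{k+1}(1-C_1/k)\le b_k+1$, so $b_{k+1}/b_k\to 1$; feeding this ratio back into the bound and invoking (ii) will give $d_k\cdot k/b_k\le C'$ uniformly for large $k$, which is (iii), and $b_{k+1}/b_k\to 1$ is the stated consequence.

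Part (iv) is where I expect the real difficulty: I need a lower bound on $d_k+1$ that pairs well with the upper bound $d_j\le 1+1/f'(\xi_j)$ for some $\xi_j\in[b_j,b_{j+1}-1]$ (the intermediate estimate from (iii)). The key trick will be to apply the MVT on the enlarged interval $[b_k-1,b_{k+1}]$ of length $d_k+1\ge 1$: since $f(b_{k+1})-f(b_k-1)>(k+1)-k=1$, there exists $\eta_k\in(b_k-1,b_{k+1})$ with $(d_k+1)f'(\eta_k)>1$. For $k\ge j+1$ one has $\xi_j\le b_{j+1}-1\le b_k-1\le \eta_k$, so the $P_1$ near-monotonicity yields $f'(\eta_k)\le C_2 f'(\xi_j)$, and
\[
\frac{d_j}{d_k+1}< f'(\eta_k)\Bigl(1+\frac{1}{f'(\xi_j)}\Bigr)\le M'+C_2.
\]
The case $k=j$ is trivial ($d_j/(d_j+1)\le 1$), so $\sup_{k\ge j}d_j/(d_k+1)\le \max(1,M'+C_2)$ for all large $j$, and the global supremum follows since the finitely many small $j$ contribute only finite ratios. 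The main obstacle is precisely the choice of the enlarged interval: the naive MVT on $[b_k,b_{k+1}]$ gives only $f(b_{k+1})-f(b_k)\ge 0$, which is useless, so one must concede one unit on the left to manufacture the strict gap $>1$ that drives the decisive estimate.
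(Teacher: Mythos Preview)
Your proof is correct and follows essentially the same strategy as the paper: exploit minimality via $f(b_k)\ge k$ and $f(b_{k+1}-1)<k+1$, and in (iv) pass to the enlarged interval $[b_k-1,b_{k+1}]$ to force a gap strictly exceeding $1$. The only noteworthy difference is in (iii): the paper integrates $f'\ge f/(ct)$ to obtain $f(t)-f(b_k)\ge (k/c)\ln(t/b_k)$ and reads off $b_{k+1}-b_k\le b_k(e^{c/k}-1)+1$ directly, whereas you use the mean value theorem to get $d_k\le 1+C_1 b_{k+1}/k$ and then bootstrap (first extract $b_{k+1}/b_k\to 1$, then feed it back to bound $kd_k/b_k$). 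Both routes work; the paper's is slightly more direct, yours avoids integration entirely.
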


\begin{proof}
Let $c,c'>0$ and $t_0$ satisfy that $f(t)>0$ and $f'(t)>0$ for $t\ge t_0$,
$\frac{f(t)}{tf'(t)}\le c\quad(t\ge t_0)$ and $f'(s)\le c'f'(t)\quad(t_0\le t\le s)$.
\smallskip

(i) follows from Lemma \ref{sublinear} (i).
\smallskip

(ii) We have $f(b_k-1)< k$ for $b_k\geq t_0+1$. So
$$
\limsup_{k\to\infty}\frac{k}{b_k}=
\limsup_{k\to\infty}\frac{k}{b_k-1}\le
\limsup_{t\to\infty}\frac{f(t)}{t}<\infty
$$
by Lemma \ref{sublinear} (ii).
\smallskip

(iii) 
For $k$ large enough and $t>b_k$ we have 
\begin{equation}\label{(1)}
f(t)-f(b_k)=\int_{b_k}^t f'(s) ds\ge
\int_{b_k}^t \frac{f(s)}{cs} ds\ge
\frac{f(b_k)}{c}\int_{b_k}^t\frac{ds}{s}\ge
\frac{k}{c}\ln\frac{t}{b_k}.
\end{equation}
Let $t_1=b_k\cdot\exp\frac{c}{k}$. Then $t_1> b_k$, $\frac{k}{c}\ln\frac{t_1}{b_k}=1$ and
$f(t_1)\ge f(b_k)+1\ge k+1$ by \eqref{(1)}. So $b_{k+1}\le t_1+1$ and
$b_{k+1}-b_k\le b_k\exp\frac{c}{k}-b_k+1$.
We have
$$
\limsup_{k\to\infty} \frac{k(b_{k+1}-b_k)} {b_k}\le
\limsup_{k\to\infty} \Bigl( k\bigl(\exp\frac{c}{k}-1\bigr) + \frac{k}{b_k}\Bigr) <\infty
$$
since
$\lim_{k\to\infty}  k\bigl(\exp\frac{c}{k}-1\bigr)=c <\infty$.
In particular,
$$
\lim_{k\to\infty} \frac{b_{k+1}-b_k}{b_k}=0
$$
and
$$
\lim_{k\to\infty} \frac{b_{k+1}}{b_k}=1.
$$

\smallskip
(iv)
Let $j$ be large enough and $k\ge j$. Write for short $d_j=b_{j+1}-b_j$.
If either $k=j$ or $d_j\le 1$ then clearly
$\frac{d_j}{d_k+1}\le 1$. 
So we may assume that $k>j$ and $d_j\ge 2$.

We have $f(b_{k+1})\ge k+1$ and $f(b_k-1)<k$. So 
\begin{eqnarray*}
1&<& f(b_{k+1})-f(b_k-1)=
\int_{b_k-1}^{b_{k+1}} f'(t) dt\le
(d_{k}+1)\cdot\max\{f'(t):b_k-1<t<b_{k+1}\}\\
&\le&
c'(d_{k}+1)\cdot\min\{f'(t):b_j<t<b_{j+1}-1\}
\le
\frac{c'(d_k+1)}{d_j-1}\int_{b_j}^{b_{j+1}-1} f'(t) dt<
\frac{c'(d_k+1)}{d_j-1}
\end{eqnarray*}
since $\int_{b_j}^{b_{j+1}-1} f'(t) dt= f(b_{j+1}-1)-f(b_j)<1$.
Hence
$d_j\le c'd_k+c'+1$ and 
$$
\sup\left\{\frac{d_j}{d_k+1}:j\le k\right\}<\infty.
$$ 
\end{proof}

In the following we will consider more general functions - bounded perturbations of functions satisfying conditions of Lemma \ref{sublinear}.

Let $T$ be a bounded linear operator on a Banach space $X$. Let $f\in P_1$ and let $(h_n)$ be a bounded integer-valued sequence. Denote by $[\cdot]$ the integer part. In the sequence $([f(n)]+h_n)$ there may be a finite number of negative terms, or even the function $f$ is not defined, and so the power $T^{[f(n)]+h_n}$ is not defined. However, the convergence of the Cesaro averages does not depend on a finite number of terms. To avoid technical difficulties, we use the convention that $T^{[f(n)]+h_n}=I$ (the identity operator on $X$) if the exponent is negative or not defined.

\begin{definition}
For $a,b\in\NN$, $a\le b$ denote by $[a,b]$ the interval $\{n\in\NN: a\le n\le b\}$.

Let $A\subset\NN$. We say that $A$ has \emph{density} $\dens A$ if
$$
\lim_{n\to\infty}\frac{\card(A\cap[1,n])}{n}=\dens A.
$$
\end{definition}

\begin{example}\label{ex:rtseq-density}
A large class of examples for sets with positive density is provided by ergodic theory. Let $(X,\mu)$ be a probability space and $T:X\to X$ be a measure preserving transformation, i.e., $\mu(T^{-1}(B))=\mu(B)$ holds for every measurable $B\subset X$. For $B\subset X$  with $\mu(B)>0$ and $x\in X$, the set 
$$
A:=\{n:\, T^nx\in B\}
$$
is called the set of all return times of $x$ to $B$. If $(X,\mu, T)$ is ergodic, i.e., if every $T$-invariant set has either full or zero measure, Birkhoff's ergodic theorem implies that $\dens A=\mu(B)$ for almost all $x\in X$. Moreover, if $(X,T)$ is uniquely ergodic, i.e., $X$ is a compact space, $T$ is continuous and $\mu$ is the unique $T$-invariant measure, then $d(A)=\mu(B)$ holds for all initial values $x\in X$.
\end{example}

Recall that a linear operator $T$ on a Banach space $X$ is called to have \emph{relatively weakly compact orbits} if for every $x\in X$ the set $\{T^nx:\,n\in\NN_{0}\}$ is relatively compact with respect to the weak topology,  where $\NN_0:=\NN\cup \{0\}$. Note that every such operator is automatically power bounded.  Moreover, for reflexive Banach spaces every power bounded operator has relatively weakly compact orbits. For more examples of operators with relatively weakly compact orbits see, e.g., \cite[Example I.1.7]{E} and 
 \cite{KL}.

\begin{theorem}\label{caseP1}
Let $T$ have relatively weakly compact orbits  on a Banach space $X$,  $\si_p(T)\cap\TT=\emptyset$, and $x\in X$. 
Let $f\in P_1$. Let $(h_n)$ be a bounded integer-valued sequence and let $A\subset\NN$ be a subset of positive density.
Then
$$
\lim_{N\to\infty}\sup_{x^*\in X^*, \|x^*\|=1}\frac{1}{\card(A\cap[1,N])}\sum_{n\in A\cap[1,N]}|\langle T^{[f(n)]+h_n}x,x^*\rangle|=0.\eqno(2)
$$
In particular, 
$$
\lim_{N\to\infty}\frac{1}{\card(A\cap[1,N])}\sum_{n\in A\cap[1,N]} T^{[f(n)]+h_n}x=0.\eqno(3)
$$ 
\end{theorem}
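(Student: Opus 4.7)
The plan is to rewrite the subsequential sum as a weighted sum over the exponent $k=[f(n)]$ and then reduce to a uniform mean ergodic estimate for $T$. By the standard sign trick, the asserted supremum over unit $x^*\in X^*$ is equivalent to $\frac{1}{\phi(N)}\bigl\|\sum_{n\in A\cap[1,N]}\e_n T^{[f(n)]+h_n}x\bigr\|\to 0$ uniformly over all unimodular sequences $(\e_n)$, where $\phi(N):=\card(A\cap[1,N])$.

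First I would absorb the bounded perturbation $h_n$. Put $H:=\sup_n|h_n|$, so that $h_n+H\in\{0,\dots,2H\}$; rewriting $T^{[f(n)]+h_n}x=T^{[f(n)]-H}(T^{h_n+H}x)$ and passing the inner factor to the dual side yields $|\langle T^{[f(n)]+h_n}x,x^*\rangle|\leq\sum_{g=0}^{2H}|\langle T^{[f(n)]-H}x,(T^*)^g x^*\rangle|$, and since $(T^*)^g$ is uniformly bounded on $X^*$, it suffices to prove the desired bound for $h_n\equiv 0$ with $x^*$ ranging in a bounded subset of $X^*$. Next I would group by $k=[f(n)]$: the level set $I_k:=\{n:[f(n)]=k\}$ is an interval contained in $[b_k,b_{k+1})$ of length at most $d_k:=b_{k+1}-b_k$ (Lemma~\ref{sublinear2}), so with $w_k(N):=\card(I_k\cap A\cap[1,N])$ and $K(N):=[f(N)]$ one has $\sum_k w_k(N)=\phi(N)$ and
$$\sum_{n\in A\cap[1,N]}|\langle T^{[f(n)]}x,x^*\rangle|=\sum_{k=0}^{K(N)}w_k(N)\,|\langle T^k x,x^*\rangle|.$$

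The decisive input I would then invoke is a uniform Koopman--von Neumann-type estimate
$$\lim_{K\to\infty}\sup_{\|x^*\|\leq 1}\frac{1}{K}\sum_{k=1}^K|\langle T^k x,x^*\rangle|=0\qquad(x\in X),$$
which uses $\si_p(T)\cap\TT=\emptyset$ essentially via the Jacobs--Glicksberg--deLeeuw splitting (forcing $X=X_2$); on Hilbert spaces this follows from the mean ergodic theorem applied to the map $A\mapsto TAT^*$ on Hilbert--Schmidt operators (which has no nontrivial fixed point under our spectral hypothesis) combined with Cauchy--Schwarz, and the general Banach-space version is derived from the JGdL structure on $X_2$. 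Writing $\e_K$ for this supremum (so $\e_K\to 0$) and $A_k^{x^*}:=\sum_{j=1}^k|\langle T^j x,x^*\rangle|\leq\e_k\,k$, an Abel summation combined with a dyadic block decomposition of $\{1,\dots,K(N)\}$, the bound $w_k(N)\leq d_k+O(1)$, the almost-monotonicity $d_j\leq C(d_k+1)$ for $j\leq k$ from Lemma~\ref{sublinear2}(iv), and $K(N)\cdot d_{K(N)}\leq C\,b_{K(N)}\leq C\,N$ from Lemma~\ref{sublinear2}(iii), bounds the weighted sum by $C'\,\e_{K(N)}\,N+o(N)$ uniformly in $\|x^*\|\leq 1$; dividing by $\phi(N)\sim\dens(A)\cdot N$ then yields the claim.

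The hard part will be the uniform mean ergodic estimate above: dropping the supremum over $x^*$ reduces to the classical mean ergodic theorem, but the uniformity requires real work and genuinely uses $\si_p(T)\cap\TT=\emptyset$. The weighted-sum bound in the final step is also technically delicate because $w_k(N)$ is not monotone; Lemma~\ref{sublinear2}(iv) is tailored precisely to control this.
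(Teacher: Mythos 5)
Your proposal is correct and follows essentially the same route as the paper: group the sum by the value of $k=[f(n)]$ (i.e.\ by the blocks $[b_k,b_{k+1})$), invoke as the decisive input the uniform Jones--Lin/Jacobs--Glicksberg--deLeeuw estimate $\sup_{\|x^*\|\le 1}K^{-1}\sum_{k\le K}|\langle T^kx,x^*\rangle|\to 0$ (which the paper simply cites from \cite{E}, Thm.~II.4.8), and control the multiplicities $w_k\le d_k+1$ via Lemma~\ref{sublinear2}(iii),(iv). The only differences are bookkeeping: the paper absorbs $(h_n)$ by enlarging the set of ``large'' indices $k$ and splits according to whether $|\langle T^kx,x^*\rangle|\ge\e$, rather than your exponent shift and dyadic blocks --- the latter are in fact unnecessary, since the crude bound $\max_{k\le K}w_k\cdot\sum_{k\le K}|\langle T^kx,x^*\rangle|\le C(d_K+1)\,\e_K K\lesssim \e_K N$ already closes the argument.
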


\begin{proof}
Without loss of generality we may assume that $f$ is continuous, positive and non-decreasing on $[0,\infty)$. Let $b_k=\min\{n\in\NN: f(n)\ge k\}$ be the numbers considered in Lemma \ref{sublinear2}. Let $M:=\sup\{\|T^n\|:n\in\NN_0\}$. Without loss of generality we may assume that $\|x\|=1$.

Let $c'>0$ satisfy
$\frac{f'(s)}{f'(t)}\le c'$ for all $t$ large enough and $s\ge t$. Let $d:=\dens A$.

Let $K$ be sufficiently large, $b_K\le N<b_{K+1}$ and $x^*\in X^*$, $\|x^*\|=1$. Then
$$
\frac{1}{\card(A\cap[1,N])}\sum_{n\in A\cap[1,N]} |\langle T^{[f(n)]+h_n}x, x^*\rangle|
$$
$$
\le
\frac{1}{\card(A\cap[1,b_K-1])}\sum_{n\in A\cap[1, b_{K}-1]} |\langle T^{[f(n)]+h_n}x,x^*\rangle|
$$
$$
+ \frac{1}{\card(A\cap[1,N])}\sum_{n\in A\cap[b_K,N]} |\langle T^{[f(n)]+h_n}x,x^*\rangle|
$$
where
$$
\lim_{N\to\infty}\frac{1}{\card(A\cap[1,N])}\sum_{n\in A\cap[b_K,N]} |\langle T^{[f(n)]+h_n}x,x^*\rangle|
$$
$$
\le \lim_{N\to\infty}\frac{(b_{K+1}-b_K)M}{\card(A\cap[1,N])}=
\lim_{N\to \infty}\frac{(b_{K+1}-b_K)M}{Nd}=0
$$
uniformly in $x^*$, 
since $\lim_{N\to \infty}\frac{(b_{K+1}-b_K)}{b_K}=0$ by Lemma \ref{sublinear2}\ (iii) and $b_K\le N$.

Hence it is sufficient to show that
$$
\lim_{K\to\infty}\sup_{x^*\in X^*, \|x^*\|=1}\frac{1}{\card(A\cap[1,b_K-1])}\sum_{n\in A\cap[1,b_K-1]} |\langle T^{[f(n)]+h_n}x,x^*\rangle|=0,
$$
which can be rewritten as
$$
\lim_{K\to\infty}\sup_{x^*\in X^*, \|x^*\|=1}\frac{1}{\card(A\cap[1,b_K-1])}\sum_{k=0}^{K-1}\sum_{n\in A\cap[b_k,b_{k+1}-1]} |\langle T^{k+h_n}x,x^*\rangle|=0.
$$
Let $r:=\max_n |h_n|$.

Fix $\e>0$.
By 
the Jacobs-Glicksberg-deLeeuw decomposition and a result of Jones and Lin,
see, e.g., \cite[Thm. II.4.8 and Rem. II.4.5]{E},
there exists $K_0$ such that for all $K\ge K_0$ and $x^*\in X^*$, $\|x^*\|=1$ we have
\begin{equation}\label{eq:conv-zero}
\frac{1}{K}\sum_{k=1}^K|\langle T^kx,x^*\rangle|<\e^2.
\end{equation}
We may also assume that $\card (A\cap[1,b_K-1])\ge \frac{b_K d}{2}$ for $K\ge K_0$.
Let $K\ge K_0$ and let $x^*\in X^*$, $\|x^*\|=1$ be fixed. Let
$$
L:=\{k: 1\le k\le K: |\langle T^kx,x^*\rangle|\ge\e\}.
$$
Then $\card L\le\e K$ by (\ref{eq:conv-zero}). Let
$$
\tilde L:=\{k: 1\le k\le K: \hbox{ there exists } k'\in L, k-r\le k' \le k+r\}.
$$
Then $\card \tilde L\le (2r+1)\e K$.

For $K$ large 
we have
\begin{eqnarray*}
& &\frac{1}{\card(A\cap[1,b_K-1])}\sum_{k=0}^{K-1}\ \sum_{n\in A\cap[b_k,b_{k+1}-1]}|\langle T^{k+h_n}x,x^*\rangle|\\
& &\qquad =
\frac{1}{\card(A\cap[1,b_K-1])}\sum_{k\in \tilde L\cap [0,K-1]}\ \sum_{n\in A\cap[b_k,b_{k+1}-1]}|\langle T^{k+h_n}x,x^*\rangle|
\\
& &\qquad+
\frac{1}{\card(A\cap[1,b_K-1])}\sum_{k\in[0,K-1]\setminus \tilde L}\ \sum_{n\in A\cap[b_k,b_{k+1}-1]}|\langle T^{k+h_n}x,x^*\rangle|
\\
& &\qquad \le \frac{2}{d b_K}\e(2r+1)K\max\{d_j:j\le K-1\}M+\e
\\
& &\qquad \le
2\e(2r+1)M\frac{K(d_K+1)}{b_k}dc''+\e\le c'''\e,
\end{eqnarray*}
where $c''$ and $c'''$ are constants, see Lemma \ref{sublinear2} (iv) and (iii).

Since $\e>0$ was arbitrary, we have
$$
\lim_{K\to\infty}\sup_{x^*\in X^*, \|x^*\|=1}\frac{1}{\card (A\cap[1,b_K-1])}\sum_{n\in A\cap[1,b_k-1]}|\langle T^{[f(n)]+h_n}x,x^*\rangle|=0
$$
and
$$
\lim_{N\to\infty}\sup_{x^*\in X^*, \|x^*\|=1}\frac{1}{\card(A\cap[1,N])}\sum_{n\in A\cap[1,N]}|\langle T^{[f(n)]+h_n}x,x^*\rangle|=0.
$$
In particular,
$$
\lim_{N\to\infty}\Bigl\|\frac{1}{\card(A\cap[1,N])}\sum_{n\in A\cap[1,N]} T^{[f(n)]+h_n}x\Bigr\|
$$
$$
=
\lim_{N\to\infty}\sup_{\|x^*\|=1}\frac{1}{\card(A\cap[1,N])}\Bigl|\sum_{n\in A\cap[1,N]}\langle T^{[f(n)]+h_n}x,x^*\rangle\Bigr|
$$
$$
\le
\lim_{N\to\infty}\sup_{\|x^*\|=1}\frac{1}{\card(A\cap[1,N])}\sum_{n\in A\cap[1,N]}|\langle T^{[f(n)]+h_n}x,x^*\rangle|=0.
$$
\end{proof}

\begin{remark}
By Jones, Lin \cite{JL1,JL2}, the following assertions are equivalent for power bounded operators $T$ on a Banach space $X$ which does not contain a copy of $l^1$:
\begin{itemize}
\item[(i)] $T^*$ has no eigenvalues on $\TT$.
\item[(ii)] $\displaystyle\lim_{N\to\infty}\sup_{x^*\in X^*,\|x^*\|=1}\aveN |\langle T^nx,x^*\rangle|=0$ for every $x\in X$.
\end{itemize}
Thus one can replace the conditions that $T$ has relatively weakly compact orbits on a Banach space $X$ and $\sigma_p(T)\cap \TT=\emptyset$ in Theorem \ref{caseP1} by the conditions that $T$ is a power bounded operator on a Banach space not containing a copy of $l^1$ and $\sigma_p(T^{*})\cap\TT= \emptyset$.
\end{remark}

Functions of subpolynomial growth will be treated inductively.  Recall that a function $g\in B$ has \emph{subpolynomial growth} if $|g(t)|\ll t^n$ for some $n\in\NN$.

\begin{lemma}\label{L2.8}
Let $m\ge 2$ and $f\in P_m$. Then:
\begin{itemize}
\item[(i)] $f'\in P_{m-1}$;

\item[(ii)] $\lim_{t\to\infty}f(t)=\infty$;

\item[(iii)] $\limsup_{t\to\infty}\frac{f(2t)}{f(t)}<\infty$;

\item[(iv)] $\limsup_{t\to\infty}\frac{tf^{(k)}(t)}{f^{(k-1)}(t)}<\infty$ for all $k=1,\dots,m$;

\item[(v)] for each $r>0$,
the function $g_r(t):=f(t+r)-f(t)$ belongs to $P_{m-1}$.
\end{itemize}

\end{lemma}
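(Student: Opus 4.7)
The plan is to prove the five assertions in sequence by induction on $m$, with Lemma~\ref{sublinear} serving as the base case $m=1$. For (i), I would just check that the four defining conditions of $P_{m-1}$ for $f'$ match, clause by clause, the corresponding $P_m$ conditions on $f$: continuity and positivity of $(f')^{(j)}=f^{(j+1)}$ transfer directly, while the two limsup clauses of $P_{m-1}$ for $f'$ at order $m-1$ coincide literally with the two limsup clauses of $P_m$ for $f$ at order $m$. Assertion (ii) then follows immediately: by (i) we have $f'\in P_{m-1}$, so the inductive hypothesis (or Lemma~\ref{sublinear}(i) when $m=2$) yields $f'(t)\to\infty$, and hence $f(t)\to\infty$.

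For (iv) I would split on the index $k$. When $k=m$, the second clause of Definition~\ref{defPm} supplies a constant $c$ with $f^{(m)}(s)\ge f^{(m)}(t)/c$ for $t/2\le s\le t$ (for $t$ sufficiently large); integrating this over $[t/2,t]$ gives $f^{(m-1)}(t)\ge (t/2)f^{(m)}(t)/c$, so $tf^{(m)}(t)/f^{(m-1)}(t)\le 2c$. When $k=1$, I would use that $f'$ is strictly increasing (because $f''\gg 0$) to obtain $f(t)\ge (t/2)f'(t/2)$, whence $tf'(t)/f(t)\le 2f'(t)/f'(t/2)$; the inductive hypothesis (iii) applied to $f'\in P_{m-1}$ then bounds the right-hand side. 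For the intermediate $1<k<m$, I would apply the $k=1$ case to $g=f^{(k-1)}\in P_{m-k+1}$, available since $m-k+1<m$. Assertion (iii) then drops out of (iv) at $k=1$: the bound $f'(t)/f(t)\le C/t$ integrates over $[t,2t]$ to $\log(f(2t)/f(t))\le C\log 2$.

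For (v) the derivatives of $g_r$ are $g_r^{(k)}(t)=f^{(k)}(t+r)-f^{(k)}(t)$, and positivity for $k\le m-1$ is automatic from strict monotonicity of $f^{(k)}$. The first $P_{m-1}$ condition for $g_r$ combines the upper bound $g_r^{(m-2)}(t)\le rf^{(m-1)}(t+r)$ (monotonicity of $f^{(m-1)}$) with the lower bound $g_r^{(m-1)}(t)\ge rf^{(m)}(t+r)/c$ (second clause of $P_m$ on $[t,t+r]$), the ratio being closed by the first clause of $P_m$ at $t+r$. The second $P_{m-1}$ condition is the main obstacle: writing $g_r^{(m-1)}(u)=rf^{(m)}(\xi_u)$ via the integral mean value theorem with $\xi_u\in(u,u+r)$, the ratio reduces to $f^{(m)}(\xi_s)/f^{(m)}(\xi_t)$. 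The case $\xi_s\ge\xi_t$ is immediate from the second clause of $P_m$, but when $\xi_s<\xi_t$ both points lie in the short window $[t,t+r]$ and no direct upper bound is available. To handle this, I would pair the already-established (iv) at $k=m$ evaluated at $t$ with the first clause of $P_m$ at $t+r$: since $f^{(m-1)}$ is increasing, these force $f^{(m)}(t)/f^{(m)}(t+r)\le CC'(t+r)/t$, which tends to $CC'$. This coupling of the two $P_m$ clauses through (iv) is the one delicate step of the whole lemma.
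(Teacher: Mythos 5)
Your proposal is correct, and its skeleton (induction on $m$ with Lemma \ref{sublinear} as the base case, parts (i) and (ii) by direct inspection) matches the paper's; the substantive differences lie inside (iii)--(v). For (iv) at $k=1$ the paper uses a limsup form of L'Hospital's rule, $\limsup \frac{tf'}{f}\le 1+\limsup\frac{tf''}{f'}$, whereas you integrate $f'$ over $[t/2,t]$ and close with the inductive (iii) for $f'\in P_{m-1}$; you then recover (iii) for $f$ by integrating $f'/f\le C/t$ over $[t,2t]$, while the paper proves (iii) directly via $f(2t)=f(2t_0)+2\int_{t_0}^t f'(2u)\,{\rm d}u$ and induction. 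Your ordering ((iv) before (iii) at level $m$) is not circular, since (iv) at level $m$ only consumes (iii) at level $m-1$. The genuine divergence is in the second $P_{m-1}$ clause of (v) in the overlapping regime $t<s<t+r$: the paper writes numerator and denominator of $g_r^{(m-1)}(s)/g_r^{(m-1)}(t)$ as sums sharing the nonnegative term $f^{(m-1)}(t+r)-f^{(m-1)}(s)$, applies $\frac{a+b}{c+b}\le\max\{1,a/c\}$, and uses Cauchy's mean value theorem, which automatically places the two intermediate points in the order where the second $P_m$ clause applies. You instead establish the reverse bound $f^{(m)}(t)/f^{(m)}(t+r)\le CC'(t+r)/t$ by coupling (iv) at $k=m$ with the first $P_m$ clause and monotonicity of $f^{(m-1)}$ --- in effect showing $f^{(m)}$ cannot drop by more than a bounded factor over an interval of bounded length, a fact the paper never isolates. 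Both routes are sound; yours trades the paper's algebraic rearrangement for an extra application of the already-proved part (iv), and your treatment of the first $P_{m-1}$ clause of (v) by direct integral bounds is likewise a valid substitute for the paper's mean-value-theorem case analysis.
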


\begin{proof}
Let $c,c'>0$ and $t_0$ satisfy that $f$ has continuous and positive derivatives of orders $\le m$ for $t\ge t_0$,
$\frac{f^{(m-1)}(t)}{tf^{(m)}(t)}\le c\quad(t\ge t_0)$ and $f^{(m)}(s)\le c'f^{(m)}(t)\quad(t_0\le t\le s)$.

(i) follows from the definition.
\smallskip

(ii) For $m=1$ this was proved in Lemma \ref{sublinear} (i).

Let $m\ge 2$ and suppose that the statement was proved for $m-1$. Let $f$ be a function of class $P_m$. Then $f'$ is of class $P_{m-1}$. By the induction assumption, $\lim_{t\to\infty}f'(t)=\infty$. Hence $\lim_{t\to\infty} f(t)=\infty$.
\smallskip

(iii)  
For $m=1$ this was proved in Lemma \ref{sublinear} (iii). Let $m\ge 2$ and suppose that the statement is true for $m-1$.

For $t\ge t_0$ we have
$$
f(t)=f(t_0)+\int_{t_0}^t f'(s) ds
$$
and
$$
f(2t)=f(2t_0)+\int_{2t_0}^{2t} f'(s) ds=
f(2t_0)+2\int_{t_0}^t f'(2u) du.
$$
Thus 
$$
\frac{f(2t)}{f(t)}\le
\max\Bigl\{ \frac{f(2t_0)}{f(t_0)}, 2\sup\Bigl\{\frac{f'(2s)}{f'(s)}:s\ge t_0\Bigr\}\Bigr\}.
$$
Since $f'\in P_{m-1}$, we have 
$\limsup{\frac{f(2t)}{f(t)}}<\infty$ by the induction assumption.
\smallskip

(iv) 
For $m=1$ this was proved in Lemma \ref{sublinear} (iv).

Let $m\ge 2$. If $2\le k\le m$ then the statement follows by the induction assumption since $f'\in P_{m-1}$.

Let $m\ge 2$ and $k=1$. Then
$$
\limsup_{t\to\infty}\frac{tf'(t)}{f(t)}\le
\limsup_{t\to\infty}\frac{tf''(t)+f'(t)}{f'(t)}=
1+\limsup_{t\to\infty}\frac{tf''(t)}{f'(t)}<\infty
$$ by the L'Hospital rule and the induction assumption.
\smallskip

(v)
Let $r>0$ 
and $g_r(t):=f(t+r)-f(t)$. Clearly $g_r$ has continuous positive derivatives of order $\le m-1$.

For $t\ge t_0$ we have
$$
\frac{g_r^{(m-2)}(t)}{t\,g_r^{(m-1)}(t)}=
\frac{f^{(m-2)}(t+r)-f^{(m-2)}(t)}
{t\,( f^{(m-1)}(t+r)-f^{(m-1)}(t))}=
\frac{f^{(m-1)}(\xi)}{t\,f^{(m)}(\xi')}
$$
for some $\xi,\xi'\in(t,t+r)$. If $\xi\le\xi'$ then 
$$
\frac{f^{(m-1)}(\xi)}{t\,f^{(m)}(\xi')}\le
\frac{f^{(m-1)}(\xi')}{\xi'\,f^{(m)}(\xi')}\cdot\frac{\xi'}{t}\le c\cdot \frac{t+r}{t}.
$$
If $\xi>\xi'$ then
$$
\frac{f^{(m-1)}(\xi)}{t\,f^{(m)}(\xi')}=
\frac{f^{(m-1)}(\xi)}{\xi \,f^{(m)}(\xi)}\cdot
\frac{\xi}{t}\cdot\frac{f^{(m)}(\xi)}{f^{(m)}(\xi')}\le
c\cdot \frac{t+r}{t}\cdot c'.
$$
Hence
$$
\limsup_{t\to\infty}\frac{g_r^{(m-2)}(t)}{t\,g_r^{(m-1)}(t)}<\infty.
$$

Let $t_0\le t<s<\infty$. If $t+r<s$ then
$$
\frac{g_r^{(m-1)}(s)}{g^{(m-1)}(t)}=
\frac{{f^{(m-1)}(s+r)}-{f^{(m-1)}(s)}}{{f^{(m-1)}(t+r)}-{f^{(m-1)}(t)}}=
\frac{f^{(m)}(\xi)}{f^{(m)}(\xi')}
$$
for some $\xi\in(s,s+r)$ and $\xi'\in(t,t+r)$. So $\xi'\le \xi$ and the above fraction is bounded by $c'$.

Let $t<s<t+r<s+r$. Then
$$
\frac{g_r^{(m-1)}(s)}{g_r^{(m-1)}(t)}=
\frac{{f^{(m-1)}(s+r)}-{f^{(m-1)}(s)}}{{f^{(m-1)}(t+r)}-{f^{(m-1)}(t)}}
$$
$$
=
\frac{\bigl({f^{(m-1)}(s+r)}-{f^{(m-1)}(t+r)}\bigr)
+\bigl({f^{(m-1)}(t+r)}-{f^{(m-1)}(s)}\bigr)}
{\bigl({f^{(m-1)}(s)}-{f^{(m-1)}(t)}\bigr)
+\bigl({f^{(m-1)}(t+r)}-{f^{(m-1)}(s)}\bigr)}
$$
$$
\le\max\Bigl\{1,\frac{f^{(m-1)}(s+r)-f^{(m-1)}(t+r)}{f^{(m-1)}(s)-f^{(m-1)}(t)}\Bigr\}=
\max\Bigl\{1,\frac{f^{(m)}(\xi)}{f^{(m)}(\xi')}\Bigr\}
$$
for some $\xi\in(t+r,s+r)$ and $\xi'\in(t,s)$. Thus $\xi'<\xi$ and $ {f^{(m)}(\xi)}\le c' {f^{(m)}(\xi')}$.
So 
$\frac{g_r^{(m-1)}(s)}{g_r^{(m-1)}(t)}\le\max\{c',1\}$.

Thus
$$
\limsup_{t\to\infty}\sup\left\{\frac{g_r^{(m-1)}(s)}{g_r^{(m-1)}(t)}: s\ge t\right\}<\infty.
$$

Hence the functions $g_r$ have property $P_{m-1}$.
\end{proof}

\section{Power bounded operators on Hilbert spaces}

The following van der Corput type result for power bounded operators on Hilbert spaces is a variation of ter Elst, M\"uller, \cite[Thm. 2.1]{tEM}.

\begin{theorem} \label{tvdc201}
Let $T$ be a power bounded operator acting on a Hilbert space $H$ and let $x\in H$.
Let $(a_n)_{n=1}^\infty$ be a strictly increasing sequence of positive integers 
such that 
$\sup\bigl\{ \frac{a_{2n}}{a_n}:n\in\NN\bigr\}<\infty$ and 
$\lim_{n\to\infty}\frac{a_{n+1}}{a_n}=1$.
Suppose that
\[
\lim_{N\to\infty}N^{-1}\sum_{j=1}^N T^{a_{j+k}-a_j}x=0
\]
for all $k\in\NN$.
Then
\[
\lim_{N\to\infty}N^{-1}\sum_{j=1}^N T^{a_j}x=0.
\]
\end{theorem}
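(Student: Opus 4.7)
The plan is to apply a van der Corput style argument adapted to power bounded operators on Hilbert space, following \cite[Thm.~2.1]{tEM}. Write $v_j := T^{a_j}x$ and $M := \sup_n \|T^n\|$, so $\|v_j\| \le M\|x\|$.

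I would begin with a shift-and-smoothing step. For a parameter $K \in \NN$ (to be sent to infinity at the end), bounded shifts alter the Ces\`aro mean only by a boundary error:
\[
\frac{1}{N}\sum_{j=1}^N v_j = \frac{1}{N}\sum_{j=1}^N \frac{1}{K}\sum_{k=0}^{K-1} v_{j+k} + R_{N,K},
\]
with $\|R_{N,K}\| \le 2KM\|x\|/N \to 0$ as $N\to\infty$ for fixed $K$. The growth conditions $\sup_n a_{2n}/a_n <\infty$ and $a_{n+1}/a_n\to 1$ are used to control these and the further reindexings that arise below. Applying Cauchy--Schwarz in the outer sum,
\[
\Big\|\frac{1}{N}\sum_j\frac{1}{K}\sum_k v_{j+k}\Big\|^2 \le \frac{1}{NK^2}\sum_{j=1}^N\sum_{k,k'=0}^{K-1}\langle v_{j+k}, v_{j+k'}\rangle.
\]
The diagonal $(k=k')$ contribution is bounded by $M^2\|x\|^2/K$, hence vanishes as $K\to\infty$. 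The proof therefore reduces to showing, for every fixed $l\ge 1$, that $\lim_N N^{-1}\sum_{i=1}^N \langle T^{a_i}x, T^{a_{i+l}}x\rangle = 0$, and then letting first $N\to\infty$ and then $K\to\infty$.

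To bring the hypothesis into play, I would use $T^{a_{i+l}} = T^{a_{i+l}-a_i}T^{a_i}$ and self-adjointness to rewrite
\[
\langle T^{a_i}x, T^{a_{i+l}}x\rangle = \langle w_i,\, T^{a_{i+l}-a_i}x\rangle, \qquad w_i := (T^*)^{a_i}T^{a_i}x,
\]
where $\|w_i\|\le M^2\|x\|$. The hypothesis provides $N^{-1}\sum_i T^{a_{i+l}-a_i}x \to 0$ in norm.

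The main obstacle is the genuine $i$-dependence of the weights $w_i$: for unitary $T$ one has $w_i = x$ and the above reduction immediately yields the desired vanishing, but for a general power bounded $T$ on a Hilbert space this simplification fails and a more delicate step is required. My proposed resolution is a second van der Corput estimate applied to the scalar sequence $c_i := \langle w_i, T^{a_{i+l}-a_i}x\rangle$: bound $|N^{-1}\sum_i c_i|^2$ in terms of higher-order correlations $N^{-1}\sum_i c_i\,\overline{c_{i+m}}$ and then reorganize these—using power boundedness and the growth conditions on $(a_n)$—into Ces\`aro averages of operator powers of $T$ along further shifted difference sequences, to which the hypothesis applies (possibly after iteration with respect to the shift~$l$). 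Taking first $N\to\infty$, then the inner truncation parameter, and finally $K\to\infty$ in the outer Cauchy--Schwarz bound completes the argument.
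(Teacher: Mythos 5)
Your setup (smoothing over $K$ shifts of the index, Cauchy--Schwarz, reduction to the correlations $\langle T^{a_i}x, T^{a_{i+l}}x\rangle$) is the standard van der Corput skeleton, and you correctly identify the exact point where it breaks for power bounded operators: the weight $w_i=(T^*)^{a_i}T^{a_i}x$ genuinely depends on $i$, so the hypothesis on $N^{-1}\sum_j T^{a_{j+k}-a_j}x$ cannot be applied directly. The problem is that your proposed resolution does not close this gap. A second (scalar) van der Corput applied to $c_i=\langle w_i, T^{a_{i+l}-a_i}x\rangle$ requires controlling the correlations $N^{-1}\sum_i c_i\overline{c_{i+m}}$; but $c_i\overline{c_{i+m}}$ is a product of two inner products each still carrying its own $i$-dependent weight $w_i$, $w_{i+m}$, and there is no algebraic identity that reorganizes such products into Ces\`aro averages of powers of $T$ along difference sequences. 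You would face the same obstruction one level up, now worse. The sentence ``reorganize these \dots into Ces\`aro averages of operator powers of $T$ along further shifted difference sequences'' is where the proof is missing; as written it is a restatement of the difficulty, not an argument.

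The idea that actually resolves this (the heart of ter Elst--M\"uller, and of the paper's proof) is an \emph{additional averaging over the initial power}: one considers the vectors
\[
u_{r,s}=T^rx+T^{r+a_{s+1}-a_s}x+\cdots+T^{r+a_{s+k-1}-a_s}x
\]
and averages $\|u_{r,s}\|^2$ over \emph{both} $s\in\{N+1,\dots,2N\}$ and $r\in\{1,\dots,a_N\}$. The lower bound comes from $T^{a_s-r}u_{r,s}=T^{a_s}x+\cdots+T^{a_{s+k-1}}x$ and power boundedness. In the upper bound, the cross terms $\sum_{r}\sum_s\langle T^{r+a_{s+j}-a_s}x,\,T^{r+a_{s+j'}-a_s}x\rangle$ are reindexed by $m=r+a_{s+j}-a_s$, which turns them into $\sum_m\langle T^mx,\,T^m\sum_{s\in\cA_m}T^{a_{s+j'}-a_{s+j}}x\rangle$; now $T^m$ can be pulled out using power boundedness and the hypothesis applies to the inner sum. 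The growth conditions $\sup_n a_{2n}/a_n<\infty$ and $a_{n+1}/a_n\to1$ enter precisely here, to guarantee that the range $\{1,\dots,a_N\}$ of $r$ is long compared to the differences $a_{s+k}-a_s$, so that the boundary set of $m$'s for which $\cA_m\ne\{N+1,\dots,2N\}$ is negligible. Without this extra $r$-average (or an equivalent device) your argument cannot be completed for power bounded $T$.
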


\begin{proof}
Without loss of generality we may assume that $\|x\|=1$.
Let 
\[
M:=\sup\{\|T^n\|: n \ge 0 \}.
\]
Suppose on the contrary that there exists an $\eta>0$ such that
\[
\limsup_{N\to\infty}N^{-1}\Bigl\|\sum_{j=1}^NT^{a_j}x\Bigr\|>\eta.
\]
Fix $k\in\NN$ such that $k>\frac{24 M^4c}{\eta^2}$, where 
$c= \sup\bigl\{\frac{a_{2n}}{a_n}:n\in\NN\bigr\}.$

By the assumptions, $\lim_{n\to\infty}\frac{a_{n+1}}{a_n}=1$, and so
$\lim_{n\to\infty}\frac{a_{n+k}}{a_n}=1$. Thus
$$
\lim_{n\to\infty}\frac{a_{n+k}-a_n}{a_n}=0.
$$

Let $N_0\in \NN$ be such that $N_0\ge\max\{\frac{2kM}{\eta},4k\}$, 
\[
\frac{4M(a_{N+k}-a_{N})}{a_N}< k^{-1}
\]
for all $N\ge N_0$ and
\begin{equation}
N^{-1}\Bigl\|\sum_{j=0}^N T^{a_{j+l}-a_j}x\Bigr\|< k^{-1}
\label{eSvdc1;1}
\end{equation}
for all $N\ge N_0$ and $l \in \{ 1,2,\dots,k-1 \} $.

We need a lemma.

\begin{lemma} \label{lvdc202}
There exists an $N\ge N_0$ such that
\[
N^{-1}\Bigl\|\sum_{j=N+1}^{2N} T^{a_j}x\Bigr\|>\eta.
\]
\end{lemma}
\begin{proof}
Fix $\eta_1$ such that 
$\eta<\eta_1<\limsup_{N'\to\infty}N'^{-1}\Bigl\|\sum_{j=1}^{N'} T^{a_j}x\Bigr\|$.
Let $v\in\NN$ be such that $\frac{M}{2^v}<\frac{\eta_1-\eta}{2}$.
There exists an $N_2\ge 4^v N_0$ such that
\[
N_2^{-1}\Bigl\|\sum_{j=1}^{N_2} T^{a_j}x\Bigr\|> \eta_1.
\]
Write $N_2=2^v\cdot N_1 +z$, where $0\le z <2^v$.
Then $N_1\ge N_0$.
Suppose on the contrary that $N^{-1}\Bigl\|\sum_{j=N+1}^{2N}T^{a_j}x\Bigr\|\le \eta$ 
for all $N\ge N_0$.
Then in particular,
\[
\frac{1}{2^{i}N_1}\Bigl\|\sum_{j=2^{i}N_1+1}^{2^{i+1}N_1}T^{a_j}x\Bigr\|
\le \eta
\]
for all $i \in \{ 0,1,\dots, v-1 \} $.
So
\begin{eqnarray*}
N_2^{-1}\Bigl\|\sum_{j=1}^{N_2}T^{a_j}x\Bigr\|
&\le& N_2^{-1}\Bigl(
\Bigl\|\sum_{j=1}^{N_1}T^{a_j}x\Bigr\|+
\Bigl\|\sum_{j=N_1+1}^{2N_1}T^{a_j}x\Bigr\|+\cdots  \\*
&&\hskip 1.5cm \cdots +
\Bigl\|\sum_{j=2^{v-1}N_1+1}^{2^vN_1}T^{a_j}x\Bigr\|
   + \Bigl\|\sum_{j=2^vN_1+1}^{N_2}T^{a_j}x\Bigr\|\Bigr) \\
&\le& N_2^{-1}\Bigl(N_1M+\eta N_1+2\eta N_1+\cdots+ 2^{v-1}\eta N_1+2^vM\Bigr)  \\
&\le&  \frac{N_1M}{N_2}+\frac{2^v\eta N_1}{N_2}+\frac{2^vM}{N_2}
\le \eta +\frac{2M}{2^v}\le \eta_1,
\end{eqnarray*}
which is a contradiction.
\end{proof}

\noindent{\bf Continuation of the proof of Theorem~\ref{tvdc201}.}
Fix $N\ge N_0$ as in Lemma~\ref{lvdc202}.
Write for short $x_j=T^jx$ for all $j\in\NN$.
For all $r \in \{ 1,\ldots,a_N \} $ and $s \in \{ N+1,\ldots,2N \} $ write
\[
u_{r,s}
:=x_r+x_{r+a_{s+1}-a_s}+\cdots+ x_{r+a_{s+k-1}-a_s}.
\]
Then
\[
T^{a_s-r}u_{r,s}=x_{a_s}+x_{a_{s+1}}+\cdots+ x_{a_{s+k-1}}.
\]
Consider
\[
A:=\frac{1}{a_N N}\sum_{r=1}^{a_N}\sum_{s=N+1}^{2N}\|u_{r,s}\|^2.
\]
We will estimate $A$ from above and from below to obtain a contradiction.

First we consider a lower bound.
Clearly
\begin{eqnarray*}
A
&\ge& \frac{1}{M^2a_NN}\sum_{r=1}^{a_N}\sum_{s=N+1}^{2N}
    \|x_{a_s}+x_{a_{s+1}}+\cdots+x_{a_{s+k-1}}\|^2\cr
&=&\frac{1}{M^2N}\sum_{s=N+1}^{2N}\|x_{a_s}+x_{a_{s+1}}+\cdots+x_{a_{s+k-1}}\|^2.
\end{eqnarray*}
The Cauchy--Schwarz inequality and the triangular inequality then give
\begin{eqnarray*}
A&\ge&
\frac{1}{M^2}\Bigl(N^{-1}\sum_{s=N+1}^{2N}\|x_{a_s}+x_{a_{s+1}}+\cdots+x_{a_{s+k-1}}\|\Bigr)^2\cr
&\ge&
\frac{1}{M^2}\Bigl\|N^{-1}\sum_{s=N+1}^{2N}(x_{a_s}+x_{a_{s+1}}+\cdots+x_{a_{s+k-1}})\Bigr\|^2.
\end{eqnarray*}
Next
\begin{eqnarray*}
\lefteqn{
\sum_{s=N+1}^{2N}\bigl(x_{a_s}+x_{a_{s+1}}+\cdots+x_{a_{s+k-1}}\bigr)
} \hspace{10mm}  \\
&=&
\sum_{s=N+1}^{N+k-1} (s-N)x_{a_s} +\sum_{s=N+k}^{2N} kx_{a_s}+\sum_{s=2N+1}^{2N+k-1}(2N+k-s)x_{a_s}.
\end{eqnarray*}
Hence
\[
A
\ge \frac{1}{M^2N^2}\Bigl(k\Bigl\|\sum_{s=N+1}^{2N}x_{a_s}\Bigr\|-k^2M\Bigr)^2
>
 \frac{1}{M^2}\Bigl(k\eta-\frac{k^2M}{N}\Bigr)^2
\ge \Bigl(\frac{k\eta}{2M}\Bigr)^2
\]
since $N\ge N_0\ge\frac{2kM}{\eta}$.

Next we estimate $A$ from above.
Using the inner product on $H$ we write
\[
A=
\frac{1}{a_NN}\sum_{r=1}^{a_N}\sum_{s=N+1}^{2N}\sum_{j,j'=0}^{k-1}
    \langle x_{r+a_{s+j}-a_s},x_{r+a_{s+j'}-a_s}\rangle=D+\sum_{0\le j<j'\le k-1}C_{j,j'},
\]
where
\[
D=\frac{1}{a_NN}\sum_{r=1}^{a_N}\sum_{s=N+1}^{2N}\sum_{j=0}^{k-1}\|x_{r+a_{s+j}-a_s}\|^2\le kM^2
\]
and
\[
C_{j,j'}=
\frac{2}{a_NN} \RRe \sum_{r=1}^{a_N}\sum_{s=N+1}^{2N}
   \langle x_{r+a_{s+j}-a_s},x_{r+a_{s+j'}-a_s}\rangle.
\]
Fix $j,j' \in \{ 0,\ldots,k-1 \} $ with $j < j'$.
Let 
\[
\BB=\Bigl\{m\in\NN: 1+\min_{s\in[N+1,2N]}\{a_{s+j}-a_{s}\}\le m\le a_N+\max_{s\in[N+1,2N]}\{a_{s+j}-a_{s}\}\Bigr\}.
\]
For all $m\in\BB$ let 
\begin{eqnarray*}
\cA_m
&=& \Bigl\{ s \in \{ N+1,\ldots,2N \} : \mbox{there exists an } r \in \{ 1,\ldots,a_N \}
      \mbox{ such that } m=r+a_{s+j}-a_s\Bigr\}  \\
&=& \Bigl\{ s \in \{N+1,\ldots,2N \} :  1\le m-a_{s+j}+a_s\le a_N\Bigr\}  \\
& = & \Bigl\{ s \in \{ N+1,\ldots,2N \} : 1+a_{s+j}-a_s\le m\le a_N+a_{s+j}-a_s\Bigr\}.
\end{eqnarray*}

Let $\BB_0=\{m: \max_s\{a_{s+j}-a_s\}< m\le a_N\}$. (Here and below in the proof we mean by $\max_s$ the maximum over $s\in\{N+1,\ldots,2N\}$). Note that $\cA_m=\{N+1,\dots,2N\}$ for all $m\in\BB_0$ and 
$$
\card(\BB\setminus\BB_0)\le 2\max_s(a_{s+j}-a_s)\le 
2\max_s(a_{s+k}-a_s)\le \max_s\frac{a_s}{2kM}\le\frac{a_{2N}}{2kM}\le\frac{ca_N}{2kM}.
$$

Then
\begin{eqnarray*}
|C_{j,j'}| 
&\le&
\frac{2}{a_NN}\Bigl|\sum_{m\in\BB}\Bigl\langle x_m, \sum_{s\in\cA_m} 
   x_{m+a_{s+j'}-a_{s+j}}\Bigr\rangle\Bigr|  \\
&\le&
\frac{2M}{a_NN}\sum_{m\in\BB}\Bigl\|\sum_{s\in\cA_m} x_{m+a_{s+j'}-a_{s+j}}\Bigr\|  \\
&\le&
\frac{2M^2}{a_NN}\sum_{m\in\BB}\Bigl\|\sum_{s\in\cA_m} x_{a_{s+j'}-a_{s+j}}\Bigr\| \\
&\le&\frac{2M^2}{a_NN}\sum_{m\in\BB_0}\Bigl\|\sum_{s=N+1}^{2N}x_{a_{s+j'}-a_{s+j}}\Bigr\|
+\frac{2M^2}{a_NN}\sum_{m\in\BB\setminus\BB_0}\Bigl\|\sum_{s\in\cA_m} x_{a_{s+j'}-a_{s+j}}\Bigr\| \\
&\le&\frac{2M^2}{a_NN}\card\BB_0 \Bigl\|\sum_{s=N+1}^{2N} x_{a_{s+j'}-a_{s+j}}\Bigr\|+
\frac{2M^2}{a_NN}\card(\BB\setminus\BB_0)\cdot MN \\
&\le&
\frac{2M^2}{N}\Bigl\|\sum_{s=N+1}^{2N} x_{a_{s+j'}-a_{s+j}}\Bigr\|+
M^2ck^{-1}.
\end{eqnarray*}
We have
\[
\sum_{s=N+1}^{2N}x_{a_{s+j'}-a_{s+j}}
=
\sum_{s=1}^{2N+j}x_{a_{s+j'-j}-a_s}-
\sum_{s=1}^{N+j}x_{a_{s+j'-j}-a_s},
\]
and so by (\ref{eSvdc1;1}) one has
\[
\Bigl\|\sum_{s=N+1}^{2N}x_{a_{s+j'}-a_{s+j}}\Bigr\|
\le k^{-1}(2N+j)+k^{-1}(N+j)\le 3Nk^{-1}+2.
\]
Hence
\[
|C_{j,j'}|
\le 6M^2k^{-1}+\frac{4M^2}{N}+M^2ck^{-1}\le 8M^2ck^{-1}
\]
and we deduce the upper bound 
\[
A
\le D+\sum_{0\le j<j'\le k-1}|C_{j,j'}|\le
kM^2+{k\choose 2}\cdot 8M^2ck^{-1}\le k M^2+4(k+1)cM^2=6kcM^2.
\]
Since 
\[
6kcM^2<\Bigl(\frac{k\eta}{2M}\Bigr)^2,
\]
we have a contradiction.
\end{proof}

\begin{definition}
We say that a subset $A\subset\NN$ is \emph{regular} if $\dens A>0$ and, for all $K\in\NN$ and each subset $B\subset [0,K]$, the set 
\begin{equation}\label{eq:regular-second-cond}
\bigl\{n\in\NN: n+j\in A\Leftrightarrow j\in B\quad(j=0,\dots,K)\bigr\}
\end{equation}
has density (either positive or equal to $0$).
\end{definition}
\noindent The second condition means -- if we identify $A$ with an infinite $0$-$1$-word -- that  every finite $0$-$1$-word appears in $A$ regularly in the sense that the beginnings of its appearances form a set which has density. Note that regularity is an asymptotic property, i.e., changing finitely many elements of $A$ does not change it.

\begin{example}
\begin{itemize}
\item[(a)] Both $\NN$ and all eventually periodic subsets of $\NN$ (e.g., infinite arithmetic progressions) are regular.
\item[(b)] As in Example \ref{ex:rtseq-density}, a large class of examples of regular sets comes from ergodic theory. Let $(X,\mu)$ be a probability space, $T:X\to X$ be a measure preserving transformation and $C\subset X$  with $\mu(C)>0$. For $x\in X$ consider the set  
$$
A:=\{n:\, T^nx\in C\}
$$ 
of return times to $C$. As discussed in Example \ref{ex:rtseq-density}, $A$ has density $\mu(C)$ for almost every $x\in X$. To verify the second property in the definition of regularity, let $K\in \NN$ and $B\subset [0,K]$. We see that the set  (\ref{eq:regular-second-cond}) equals 
\begin{eqnarray*}
&\ &\{n\in\NN:\, T^{n+j}x\in C \ \forall {j\in B},\ T^{n+j}x\notin C \ \forall {j\in [0,K]\setminus B}\}\\
&\ & \quad =
\{n\in\NN:\, T^{n}x\in \bigcap_{j\in B}T^{-j}C\cap \bigcap_{j\in [0,K]\setminus B} T^{-j}(X\setminus C)\}=\{n\in\NN:\, T^{n}x\in D\}
\end{eqnarray*}
for $$D:=\bigcap_{j\in B}T^{-j}C\cap \bigcap_{j\in [0,K]\setminus B} T^{-j}(X\setminus C).$$
Therefore the set  (\ref{eq:regular-second-cond}) is a return times set and hence, by Birkhoff's ergodic theorem, for almost every $x\in X$ has density $\mu(D)$ (which might be zero). Since there are countably many finite sets $B$, we obtain that for almost every $x\in X$ the set $A$ of return times to $C$ is regular. Moreover, as in Example \ref{ex:rtseq-density}, every $x\in X$ does it for uniquely ergodic systems.   As mentioned above, changing finitely many elements of $A$ does not change the regularity property, and we obtain the examples in (a) as a special case by taking $X$ being a finite set, $T$ being a rotation on $X$ and $\mu$ being the  normalized counting measure.
\item[(c)] A different class of examples are sets whose characteristic function is a normal $0$-$1$-sequence. It is well known that almost every $0$-$1$-sequence is normal.
\end{itemize}
\end{example}

\smallskip

Let $A\subset\NN$ be regular. Clearly then the set
$$
A_{k,m}=\bigl\{n\in A: n+m\in A, \card(A\cap[n,n+m])=k+1\bigr\}
$$
has density for all $k,m\in\NN, m\ge k$.

\begin{theorem}[Subsequential ergodic theorem for regular sets]\label{thm:subseq}
Let $T\in B(H)$ be power bounded, $\si_p(T)\cap\TT=\emptyset$, $A\subset\NN$ regular, $f\in P_m$ for some $m\in\NN$, and let $(h_n)$ be a bounded integer-valued sequence. Then
$$
(SOT)-\lim_{N\to\infty}\frac{1}{\card(A\cap[1,N])}\sum_{n\in A\cap[1,N]}T^{[f(n)]+h_n}=0.
$$
\end{theorem}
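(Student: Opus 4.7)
I will proceed by induction on $m$. The base case $m=1$ is immediate from Theorem \ref{caseP1}, since every regular set has positive density by definition.

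For the inductive step, fix $f\in P_m$ with $m\ge 2$ and assume the statement for $m-1$. Enumerate $A$ as $n_1<n_2<\cdots$ and set $a_j:=[f(n_j)]+h_{n_j}$; then the conclusion is equivalent to $J^{-1}\sum_{j=1}^J T^{a_j}x\to 0$ for every $x\in H$, which I plan to derive from Theorem \ref{tvdc201}. First I check the growth hypotheses on $(a_j)$. Since $\dens A=d>0$ one has $n_j/j\to 1/d$, hence $n_{j+1}/n_j\to 1$ and $n_{2j}/n_j\to 2$. Integrating the bound $f'(s)/f(s)\le C/s$ from Lemma \ref{L2.8}(iv) yields $f(t+h)/f(t)\le ((t+h)/t)^C$, whence $f(n_{j+1})/f(n_j)\to 1$ and $\sup_j f(n_{2j})/f(n_j)<\infty$. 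Since $f(n_j)\to\infty$ and $(h_n)$ is bounded, this lifts to $a_{j+1}/a_j\to 1$ and $\sup_j a_{2j}/a_j<\infty$. Eventual strict monotonicity of $(a_j)$ follows from $f'(n_j)\to\infty$ (Lemma \ref{L2.8}(ii) applied to $f'\in P_{m-1}$), which dominates the bounded perturbation.

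The main work is to verify the van der Corput condition $J^{-1}\sum_{j=1}^J T^{a_{j+k}-a_j}x\to 0$ for each $k\in\NN$. I plan to partition the sum by the value $r:=n_{j+k}-n_j\ge k$. Setting $g_r(t):=f(t+r)-f(t)\in P_{m-1}$ (Lemma \ref{L2.8}(v)), one has $a_{j+k}-a_j=[g_r(n_j)]+\tilde h_j$ with $(\tilde h_j)$ uniformly bounded. The indices $j$ with $n_{j+k}-n_j=r$ correspond precisely to those with $n_j\in A_{k,r}$. Since membership in $A_{k,r}$ is determined by a finite $0$--$1$-pattern of $A$ on $[n,n+r]$, regularity of $A$ ensures that $A_{k,r}$ has a density $\rho_{k,r}$ and is itself regular (any finite pattern in $A_{k,r}$ is re-encoded as a finite pattern in $A$); moreover $\sum_{r\ge k}\rho_{k,r}=d$ since the $A_{k,r}$ partition $A$. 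Given $\e>0$, pick $R$ with $\sum_{r>R}\rho_{k,r}<\e d$; the tail $r>R$ then contributes at most $M\e$ in norm, where $M:=\sup_n\|T^n\|$. For each $r\le R$ with $\rho_{k,r}>0$, the induction hypothesis applied to $g_r\in P_{m-1}$, the regular set $A_{k,r}$, and the bounded sequence $\tilde h$ sends the inner average to zero. Letting $\e\to 0$ yields the difference condition, and Theorem \ref{tvdc201} concludes the proof.

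The two main obstacles I anticipate are (i) verifying carefully that $A_{k,r}$ inherits regularity from $A$ via the pattern-encoding argument, and (ii) executing the truncation-then-induction step uniformly in $r$ so that only finitely many invocations of the induction hypothesis are needed in each $\e$-approximation. The growth conditions on $(a_j)$ also require some care in the presence of possibly large gaps in $A$, but positive density of $A$ alone already forces $n_{j+1}/n_j\to 1$, which is what is used above.
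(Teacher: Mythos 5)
Your proposal is correct and follows essentially the same route as the paper: induction on $m$ reducing to Theorem \ref{tvdc201}, with the van der Corput condition verified by partitioning $A$ into the sets $A_{k,r}$, discarding a small-density tail, and applying the induction hypothesis to $g_r=f(\cdot+r)-f(\cdot)\in P_{m-1}$ (Lemma \ref{L2.8}(v)) with the bounded perturbation $\tilde h$. Your verification of the growth hypotheses by integrating $f'/f\le C/t$, and your explicit observation that $A_{k,r}$ inherits regularity via pattern re-encoding, are minor variants of (and slightly more explicit than) the corresponding steps in the paper's proof.
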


\begin{proof}
By induction on $m$.

For $m=1$ the statement was proved in Theorem \ref{caseP1}.

Let $m\ge 2$ and suppose the statement is true for $m-1$. Without loss of generality we may assume that $f$ is defined and increasing on $[0,\infty)$. 

For $n\in\NN$ let $g_A(n)$ be the $n$-th element of $A$, i.e., $g_A(n)\in A$ and $\card(A\cap[1, g_A(n)])=n$. Let $r=\sup\{|h_n|: n\in\NN\}$. Let $x\in H, \|x\|=1$. 

Define $\tilde f_n=[f(g_A(n))]+h_{g_A(n)}$. So we are supposed to show 
$$
\lim_{N\to\infty}\frac{1}{N}\sum_{n=1}^N T^{\tilde f_n}x=0.
$$

\medskip



\noindent{\bf Claim 1.}
$\lim_{n\to\infty}\frac{\tilde f_{n+1}-\tilde f_n}{\tilde f_n}=0.$  

\smallskip
\begin{proof}
Clearly $f(g_A(n))\ge f(n)\to\infty$ as $n\to\infty$. Since $|h_{g_A(n)}|\le r$, it is sufficient to show that
$$
\lim_{n\to\infty}\frac{f({g_A(n+1)})-f(g_A(n))}{f(g_A(n))}=0.
$$
Let $d=\dens A$ and $\e\in (0,d/2)$. For $n$ large enough we have
$$
n(d-\e)\le\card(A\cap[1,n])\le n(d+\e).
$$
So
\begin{equation}
\frac{n}{d+\e}\le g_A(n)\le \frac{n}{d-\e}.\label{gAn}
\end{equation}
Thus
$$
\frac{f({g_A(n+1)})-f(g_A(n))}{f(g_A(n))}\le
\frac{f(\frac{n+1}{d-\e})-f(\frac{n}{d+\e})}{f(\frac{n}{d+\e})}
$$
$$
\le
\frac{\bigl(\frac{n+1}{d-\e}-\frac{n}{d+\e}\bigr)f'(\xi)}{f(\frac{n}{d+\e})}
$$
for some $\xi\in(\frac{n}{d+\e},\frac{n+1}{d-\e})$.
We have
$$
\frac{n+1}{d-\e}-\frac{n}{d+\e}\le
\frac{(n+1)(d+\e)-n(d-\e)}{d^2-\e^2}\le
\frac{2\e n+d+\e}{d^2-\e^2}\le \frac{6\e n}{d^2}
$$
for $n$ large enough.
So
$$
\frac{f({g_A(n+1)})-f(g_A(n))}{f(g_A(n))}\le
\frac{6\e n}{d^2}\cdot\frac{f'(\xi)}{f(\frac{n}{d+\e})}
$$
$$
\le
\e\cdot\frac{6}{d^2}\cdot\frac{\xi f'(\xi)}{f(\xi)}\cdot
\frac{f(\xi)}{f(\frac{n}{d+\xi})}\cdot
\frac{n}{\xi}\le
\e\frac{6c}{d^2}\cdot\frac{f(\frac{2n}{d+\xi})}{f(\frac{n}{d+\xi})}\cdot(d+\e)
\le\e\cdot\hbox{const},
$$
where $c=\sup\{\frac{tf'(t)}{f(t)}: t\ge \frac{n}{d+\e}\}$, see Lemma \ref{L2.8} (iv) and (iii).
Since $\e>0$ was arbitrary, we have $\lim_{n\to\infty}\frac{\tilde f_{n+1}-\tilde f(n)}{\tilde f(n)}=0$.
\end{proof}

\noindent{\bf Claim 2.}
$\sup_n\frac{\tilde f_{2n}}{\tilde f_n}<\infty$
\smallskip

\begin{proof}
Let $d=\dens A$ and recall that 
$\limsup_{n\to\infty}\frac{f(2n)}{f(n)}<\infty$ by Lemma \ref{L2.8} (iii). Let $\e\in(0,d/2)$.
By (\ref{gAn}), we have
$$
\limsup_{n\to\infty}\frac{\tilde f_{2n}}{\tilde f_n}\le
\limsup_{n\to\infty}\frac{f(\frac{2n}{d-\e})+r}{f(\frac{n}{d+\e})-r-1}
=\limsup_{n\to\infty}\frac{f(\frac{2n}{d-\e})}{f(\frac{n}{d+\e})}
\le
\limsup\frac{f(\frac{4n}{d+\e})}{f(\frac{n}{d+\e})}\le
\limsup_{t\to\infty}\frac{f(4t)}{f(t)}<\infty.
$$
\end{proof}


So by Theorem \ref{tvdc201} it is sufficient to show that $\lim_{N\to\infty}N^{-1}\sum_{n=1}^N T^{\tilde f_{n+k}-\tilde f_n}x=0$ for all $k\in\NN$.

Fix $k\in\NN$. For each $m\ge k$ let $A_{k,m}=\bigl\{n\in A: n+m\in A, \card(A\cap[n,n+m])=k+1\bigr\}$. By assumption, each set $A_{k,m}$ has density. 

Let $\e>0$. Then there exists $M_0\in\NN$ such that
$$
\dens\bigcup_{m\le M_0} A_{k,m}\ge\dens A-\e.
$$

\bigskip

So  it is sufficient to show that for each $m, k\le m\le M_0$ such that $\dens A_{k,m}>0$ we have
$$
\lim_{N\to\infty}\frac{1}{\card(A_{k,m}\cap[1,N])} \sum_{n\in A_{k,m}\cap[1,N]} 
T^{\tilde f_{n+k}-\tilde f_n}x=0.
$$

However, this is equal to
$$
\lim_{N\to\infty}\frac{1}{\card(A_{k,m}\cap[1,N])} \sum_{n\in A_{k,m}\cap[1,N]}  
T^{[f(n)+m)]-[f(n)]+h_{n+m}-h_{n}} x
$$
$$
=\lim_{N\to\infty}\frac{1}{\card(A_{k,m}\cap[1,N])} \sum_{n\in A_{k,m}\cap[1,N]}
T^{[f(n+m)-f(n)]+\tilde h_{n}}x,
$$
where $\tilde h_{n}=[f(n+m)]-[f(n)]-[f(n+m)-f(n)]+h_{n+m}-h_{n}$.
Since $\sup_n\tilde h_n<\infty$, the last limit is equal to $0$ by the induction assumption.
\end{proof}

\begin{remark}\label{rem:not-full-regularity}
One can weaken the regularity assumption on the set $A$ in Theorem \ref{thm:subseq}. 
In fact, it suffices if $A$ has positive density and each of the sets $A_{k,m}$ has density.
\end{remark}

\begin{remark}
One cannot drop the assumption $\sigma_p(T)\cap\TT=\emptyset$ in Theorems \ref{caseP1} and \ref{thm:subseq}  even for contractions. Indeed, taking, e.g., $T=-I$ one can easily make any convergent averages $\aveN (-1)^{k_n}$ into divergent ones by adding $1$ to $k_n$ for appropriate $n$'s and vice versa.
\end{remark}

A direct consequence of Theorem \ref{thm:subseq} is the following. 
\begin{corollary}[Subsequential ergodic theorem]
Let $T\in B(H)$ be power bounded with $\si_p(T)\cap\TT=\emptyset$ and $f\in P_m$ for some $m\in\NN$. Then
$$
(SOT)-\lim_{N\to\infty}\frac{1}{N}\sum_{n=1}^N T^{[f(n)]}=0.
$$
\end{corollary}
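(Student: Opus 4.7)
The plan is to derive this corollary as a direct specialization of Theorem~\ref{thm:subseq}, which is already the main work. Concretely, I would apply Theorem~\ref{thm:subseq} with the choice $A = \NN$ and the bounded integer-valued sequence $h_n \equiv 0$.

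The one thing to verify is that $\NN$ itself is regular in the sense of the definition preceding Theorem~\ref{thm:subseq}. Density is immediate: $\dens \NN = 1 > 0$. The second condition is essentially trivial: for any $K \in \NN$ and any $B \subset [0,K]$, the set $\{n \in \NN : n+j \in A \Leftrightarrow j \in B \text{ for } j=0,\dots,K\}$ is either all of $\NN$ (when $B = [0,K]$) or empty (when $B \neq [0,K]$), since every $n+j$ lies in $\NN$. In both cases it has a density. This is also explicitly stated in the example preceding Theorem~\ref{thm:subseq}, so no real argument is required.

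With these choices, $\card(A \cap [1,N]) = N$ and $T^{[f(n)] + h_n} = T^{[f(n)]}$, and the conclusion of Theorem~\ref{thm:subseq} becomes exactly the asserted SOT convergence. Since no step is substantial, there is no real obstacle; the corollary is simply the cleaned-up statement of the main theorem in the classical case $A = \NN$ with trivial perturbation.
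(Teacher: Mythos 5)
Your proposal is correct and coincides with the paper's intended argument: the corollary is stated as a direct consequence of Theorem~\ref{thm:subseq}, obtained exactly by taking $A=\NN$ (regular, as noted in the example preceding the theorem) and $h_n\equiv 0$. Nothing further is needed.
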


\smallskip

We now turn our attention to weighted averages. For short we write $e(t):=e^{2\pi i t}$ for $t\in\RR$.

\begin{definition}
Let $g\in B$. We say that $g$ satisfies \emph{property $(Q)$} if the set
$$
\{n\in\NN: e(g(n))\in I\}
$$
is regular for each interval $I\subset\TT$.
\end{definition}

For examples of such $g$ see Section \ref{sec:hardy} below.

\begin{theorem}[Weighted subsequential ergodic theorem]\label{thm:main-weighted}
Let $T$ be a power bounded operator acting on a Hilbert space $H$, $\si_p(T)\cap\TT=\emptyset$, let $g$ satisfy $(Q)$ and let $f\in P_m$ for some $m\in\NN$.
Then
$$
(SOT)-\lim_{N\to\infty}\frac{1}{N}\sum_{n=1}^N e(g(n))T^{[f(n)]}=0.
$$
\end{theorem}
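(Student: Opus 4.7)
The plan is to reduce the weighted statement to the unweighted Theorem \ref{thm:subseq} by slicing the unit circle into short arcs and replacing $e(g(n))$ by a piecewise constant approximation on the corresponding level sets, which property $(Q)$ guarantees to be regular (or of density zero).

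Fix $x\in H$ with $\|x\|=1$, let $M:=\sup_{n\ge 0}\|T^n\|$, and fix $\e>0$. Choose $J\in\NN$ and a partition of $\TT$ into disjoint arcs $I_1,\dots,I_J$ each of length at most $\e/M$, and pick a point $\zeta_j\in I_j$ for each $j$. Define the level sets
$$
A_j:=\bigl\{n\in\NN:\ e(g(n))\in I_j\bigr\},\qquad j=1,\dots,J,
$$
which form a partition of $\NN$. By property $(Q)$, for every $j$ such that $\dens A_j>0$ the set $A_j$ is regular, so Theorem \ref{thm:subseq} applies; for the remaining $j$ the set has density zero and contributes negligibly.

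Decompose the average as
$$
\frac{1}{N}\sum_{n=1}^N e(g(n))T^{[f(n)]}x=\sum_{j=1}^J\frac{1}{N}\sum_{n\in A_j\cap[1,N]}\bigl(e(g(n))-\zeta_j\bigr)T^{[f(n)]}x+\sum_{j=1}^J \zeta_j\cdot\frac{1}{N}\sum_{n\in A_j\cap[1,N]}T^{[f(n)]}x.
$$
For the first sum, the inner difference $|e(g(n))-\zeta_j|\le\e/M$ for $n\in A_j$, so its norm is bounded by
$$
\sum_{j=1}^J\frac{\e}{M}\cdot M\cdot\frac{\card(A_j\cap[1,N])}{N}=\e
$$
for every $N$. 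For the second sum, fix $j$: if $\dens A_j>0$ then Theorem \ref{thm:subseq} applied to the regular set $A_j$ gives
$$
\lim_{N\to\infty}\frac{1}{\card(A_j\cap[1,N])}\sum_{n\in A_j\cap[1,N]}T^{[f(n)]}x=0,
$$
and since $\card(A_j\cap[1,N])/N$ is bounded we obtain $N^{-1}\sum_{n\in A_j\cap[1,N]}T^{[f(n)]}x\to 0$. If instead $\dens A_j=0$, the same conclusion follows trivially from $\|N^{-1}\sum_{n\in A_j\cap[1,N]}T^{[f(n)]}x\|\le M\cdot\card(A_j\cap[1,N])/N\to 0$. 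As the sum over $j$ is finite and $|\zeta_j|=1$, the second sum tends to $0$.

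Combining the two estimates, $\limsup_{N\to\infty}\|N^{-1}\sum_{n=1}^N e(g(n))T^{[f(n)]}x\|\le\e$, and since $\e>0$ was arbitrary the limit is $0$. The potential obstacle is a reading of property $(Q)$ in which some $A_j$ fails to have a density at all; however, the zero/positive-density dichotomy above handles precisely the two useful cases, so the only genuine work has already been done inside Theorem \ref{thm:subseq}.
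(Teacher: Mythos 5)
Your proposal is correct and follows essentially the same route as the paper: partition $\TT$ into short arcs, approximate $e(g(n))$ by a constant on each level set $A_j$ (regular by property $(Q)$), and apply Theorem \ref{thm:subseq} to each $A_j$. Your extra remark handling a possible density-zero level set is a harmless robustness addition that the paper does not need, since its definition of regularity (and hence of $(Q)$) already forces each $A_j$ to have positive density.
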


\begin{proof}
Let $k_0\in\NN$. For $k=1,\dots,k_0$ let $I_k=\bigl\{e(s):\frac{k-1}{k_0}\le s<\frac{k}{k_0}\bigr\}$. Then the sets $I_k$ are mutually disjoint and $\bigcup_{k=1}^{k_0}I_k=\TT$. Let $\la_k=e(\frac{2k-1}{2k_0})$. So $|\la_k-\la|\le\frac{\pi}{k_0}$ for each $\la\in I_k$.

Let $A_k:=\{n\in\NN: e(g(n))\in I_k\}$. By the definition, $A_k$ is regular for each $k$.

Let $x\in H$ be a unit vector. We have  for $M:=\sup_{n\in \NN_0}\|T^n\|$ by Theorem \ref{thm:subseq}
$$
\limsup_{N\to\infty}\frac{1}{N}\left\|\sum_{n=1}^N e(g(n))T^{[f(n)]}x\right\|\le
\limsup_{N\to\infty}\frac{1}{N}\sum_{k=1}^{k_0}
\Bigl\|\sum_{n\in A_k\cap[1,N]} e(g(n))T^{[f(n)]}x\Bigr\|
$$
$$
\le
\limsup_{N\to\infty}\frac{1}{N}\sum_{k=1}^{k_0}\Bigl(
\Bigl\|\sum_{n\in A_k\cap[1,N]} \la_k T^{[f(n)]}x\Bigr\|+
\Bigl\|\sum_{n\in A_k\cap[1,N]} \bigl(e(g(n))-\la_k\bigr)T^{[f(n)]}x\Bigr\|\Bigr)
$$
$$
\le 
\lim_{N\to\infty}
\sum_{k=1}^{k_0}
\frac{\card(A_k\cap[1,N])}{N}\Bigl\|\frac{1}{\card(A_k\cap[1,N])}\sum_{n\in A_k\cap[1,N]}T^{[f(n)]}x\Bigr\|+\frac{\pi M}{k_0}=
\frac{\pi M}{k_0}.
$$
Since $k_0\in\NN$ was arbitrary, we have
$$
\lim_{N\to\infty}\frac{1}{N}\sum_{n=1}^N e(g(n))T^{[f(n)]}x=0.
$$
\end{proof}
\begin{remark}
It again suffices if $g$ satisfies a weaker property than (Q), namely if for every rational (or dyadic) interval $I\subset \TT$ the sets $\{n\in\NN: e(g(n))\in I\}$ satisfy the property from Remark \ref{rem:not-full-regularity}.
\end{remark}

\section{Examples: Hardy functions\label{sec:hardy}}

Condition (Q) and classes $P_m$ are closely connected with Hardy functions.

Clearly $B$ with the natural algebraic operations is a ring. A subfield of $B$ is called \emph{Hardy field} if it is closed under differentiation. Denote by $U$ the union of all Hardy fields.

We summarize the basic properties of the set $U$.

\begin{theorem}
\begin{itemize}

\item[(i)] $U$ contains the class $L$ of logarithmico-exponential functions introduced by G. Hardy (i.e., all functions defined for all $t$ sufficiently large by a finite combination of ordinary algebraic operations ($+,-,\cdot,:$), powers, logarithms and exponential function. More precisely, $L$ is the smallest set containing the real constant functions, function $t\mapsto t$, and if $f,g\in L$ then $f+g,f-g,fg,f/g,\ln f,\exp f\in L$ (whenever the expression has sense).

\item[(ii)] If $f\in U$ then $f$ has continuous derivatives of all orders, which also belong to $U$.

\item[(iii)] If $f\in U$ is non-zero (in the sense of $B$) then either $f(t)>0$ or $f(t)<0$ for all $t$ sufficiently large. Similarly, if $f\in U$ is not constant, then either $f$ is increasing, or decreasing for all $t$ sufficiently large (since the derivative $f'$ is either positive, or negative). Consequently
the limit $\lim_{t\to\infty}f(t)\in\RR\cup\{\pm\infty\}$ exists for each $f\in U$.
\end{itemize}
\end{theorem}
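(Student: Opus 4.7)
The plan is to handle the three parts separately, using the defining properties of a Hardy field (a subfield of $B$ closed under differentiation) and the fact that in such a field every nonzero germ is automatically eventually nonzero.

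For (i), I would show that $L$ itself is a Hardy field, which immediately gives $L\subset U$. The inductive definition of $L$ already provides closure under the four field operations and under $\log$ and $\exp$. What remains is to verify (a) closure under differentiation and (b) that every nonzero $f\in L$ is eventually nonzero, so that reciprocals are well-defined as germs. Step (a) is a routine induction on the syntactic build-up of $f$, using the chain rule together with $(\log u)'=u'/u$ and $(\exp u)'=u'\exp u$. Step (b) is Hardy's classical ``definite sign'' theorem, proved by a more delicate induction on the complexity of the defining expression, reducing in the end to the fact that polynomials and rational functions have only finitely many zeros.

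For (ii), any $f\in U$ lies in some Hardy field $F$. By definition $F$ is closed under $d/dt$, so $f'\in F$, and inductively $f^{(k)}\in F\subset U$ for every $k\in\NN$. Since elements of $F$ are germs of continuous real functions, all these derivatives are continuous.

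For (iii), let $f\in U$ be a nonzero germ and pick a Hardy field $F$ containing $f$. Then $1/f\in F$ is a germ of a continuous real-valued function on some $[t_0,\infty)$, which forces $f(t)\neq 0$ there; continuity and the intermediate value theorem then give that $f$ has constant sign on $[t_0,\infty)$. Applied to $f'\in F$ (which is in $U$ by (ii)), this shows that if $f$ is non-constant then $f'$ is not the zero germ (otherwise $f$ would be eventually constant), so $f'$ has eventually constant sign and $f$ is eventually monotone. A continuous monotone function on $[t_0,\infty)$ has a limit in $\RR\cup\{\pm\infty\}$, giving the last assertion. The main obstacle is step (b) in part (i): ruling out an accumulation of zeros or sign changes for a general log-exp function at $+\infty$ requires Hardy's classical induction on syntactic complexity, and is the nontrivial content of his original theorem; parts (ii) and (iii) are essentially formal consequences of membership in a Hardy field.
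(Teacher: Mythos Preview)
The paper does not prove this theorem at all: it is stated as a summary of classical facts about Hardy fields (introduced with ``We summarize the basic properties of the set $U$'') and no argument is given. Your outline is the standard route to these facts and is essentially correct; in particular you have correctly isolated Hardy's ``definite sign'' theorem as the only nontrivial ingredient in (i), and your derivations of (ii) and (iii) from the field axioms plus closure under $d/dt$ are sound. One small refinement for (iii): you should also cover the case where $f$ is a nonzero constant germ when concluding that $\lim_{t\to\infty}f(t)$ exists, but this is trivial.
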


Denote by $U_+$ the set of all functions $f\in U$ which are positive (for all $t$ sufficiently large).

For $f,g\in U_+$ we write $f\prec g$ if $\lim_{t\to\infty}\frac{f(t)}{g(t)}=0$ and $f\sim g$ if $\lim_{t\to\infty}\frac{f(t)}{g(t)}\in (0,\infty)$. We write $f\,{\precsim}\,\,g$ if either $f\prec g$ or $f\sim g$.

If $f,g\in U$ then they do not necessarily belong to the same Hardy field, so in general they are not comparable. However, if $f\in U$, $g\in L$ and $g\ne 0$ then $f/g\in U$ and so the limit $\lim_{t\to\infty} \frac{f(t)}{g(t)}$ exists. In particular, this is true for the function $g(t)=t^\al$ for each real $\al$ and we have the following by the L'Hospital rule.

\begin{proposition}
Let $f\in U_+$ and $\al\in\RR$. Then either $f\prec t^\al$, or $f\sim t^\al$, or $t^\al\prec f$. Moreover, if $f'>0$ and $\al>0$ then $f\prec t^\al\Rightarrow f'\prec t^{\al-1}$ and analogous implications hold if $f\sim t^\al$ or $t^\al\prec f$.
\end{proposition}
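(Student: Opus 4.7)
The plan is to deduce both assertions from the two facts stated in the theorem just above the proposition: that $f/g\in U$ whenever $f\in U$ and $g\in L\setminus\{0\}$, and that every function in $U$ admits a limit in $\RR\cup\{\pm\infty\}$ by property (iii) of that theorem.

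For the trichotomy I would take $g(t):=t^\al=\exp(\al\ln t)$, which lies in $L$ (constants and the identity are in $L$, and $L$ is closed under $\exp,\ln$ and products) and is non-zero. Then $f/t^\al\in U$, and since $f\in U_+$ the quotient is positive, hence
$$
L:=\lim_{t\to\infty}\frac{f(t)}{t^\al}
$$
exists in $[0,\infty]$. The three cases $L=0$, $L\in(0,\infty)$ and $L=\infty$ are, directly from the definitions of $\prec$ and $\sim$, exactly $f\prec t^\al$, $f\sim t^\al$ and $t^\al\prec f$, respectively.

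For the second assertion, assume $\al>0$ and $f'>0$. By property (ii) of the theorem, $f'\in U$, so $f'\in U_+$, and the trichotomy already proved, applied to $f'$ with exponent $\al-1$, yields that
$$
L':=\lim_{t\to\infty}\frac{f'(t)}{t^{\al-1}}
$$
exists in $[0,\infty]$. Since $\al>0$ we have $t^\al\to\infty$, so L'Hôpital's rule (in the form that requires only that the denominator tends to $\infty$ and that the ratio of derivatives has a limit in $[-\infty,+\infty]$) gives
$$
L=\lim_{t\to\infty}\frac{f(t)}{t^\al}=\lim_{t\to\infty}\frac{f'(t)}{\al\,t^{\al-1}}=\frac{L'}{\al}.
$$
This single identity delivers all three implications simultaneously: $L=0$ iff $L'=0$, $L\in(0,\infty)$ iff $L'\in(0,\infty)$, and $L=\infty$ iff $L'=\infty$; equivalently, $f\prec t^\al\Rightarrow f'\prec t^{\al-1}$, $f\sim t^\al\Rightarrow f'\sim t^{\al-1}$ and $t^\al\prec f\Rightarrow t^{\al-1}\prec f'$.

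The main (and only) subtlety is the invocation of L'Hôpital: one must use the version that assumes only $g(t)\to\infty$ and the existence of $\lim f'/g'$, with \emph{no} hypothesis on $\lim f(t)$ itself (in particular, $f$ need not tend to $\infty$; it could have a finite limit, e.g.~when $\al>0$ but $f$ is bounded). The existence of $L'$, granted by the trichotomy applied to the Hardy-field function $f'$, is precisely what licenses this version and reduces the whole argument to the one-line computation above.
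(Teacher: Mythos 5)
Your proposal is correct and follows essentially the same route as the paper, which obtains the trichotomy from the existence of $\lim f(t)/t^\al$ (since $f/t^\al\in U$ for the logarithmico-exponential function $t^\al$) and then derives the implications for $f'$ via L'H\^opital's rule. Your explicit identification of $L=L'/\al$, using the version of L'H\^opital that requires only the denominator to tend to $\infty$ together with the existence of $L'$ guaranteed by applying the trichotomy to $f'\in U_+$, is exactly the intended argument, spelled out in full.
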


\begin{definition}
For $m\in\NN$ define the class $P'_m$ of functions $f\in U_+$ satisfying

(i) $t^{m-1}\,\prec f\,{\precsim}\,\, t^m$,

(ii) $f^{(m-1)}\,{\precsim}\,\,t f^{(m)}$.
\end{definition}

It is easy to see that $P'_m\subset P_m$.

\begin{example}  
Functions of the form $t^\alpha \ln^\beta t (\ln\ln  t)^\gamma$ ($m-1<\alpha< m, \beta,\gamma\in\RR$) 
or $\sum_{j=0}^k c_jt^{\al_j}$  ($c_0,\dots,c_k,\al_0,\dots,\al_k\in\RR$, $c_0>0$, $\al_0>\max\{0,\al_1,\dots,\al_k\}$, $m-1<\al_0\leq m$) are in $P'_m$ and therefore in $P_m$, 
which includes real polynomials of degree $m$ with positive leading coefficient. 
On the other hand, functions of the form $t^k\ln t$, $k\in\NN\cup\{0\}$ are not in $P_m$ and hence not in $P'_m$ for any $m$.  It would be interesting to know whether Theorem \ref{thm:subseq} still holds for 
these functions.
\end{example}

\medskip 

Let $g\in B$ be a function. The following conditions are sufficient for $g$ to satisfy property (Q):
\begin{itemize}
\item[(Q1)] For every interval $I\subset \TT$, the limit 
$$
\limaveN 1_{I}(e(g(n)))
$$
exists and is positive (where $1_I$ denotes the characteristic function of the interval $I$).
\item[(Q2)] For every $k\in\NN$ and every intervals $I_0,\ldots,I_k\subset \TT$, the limit
$$
\limaveN 1_{I_0}(e(g(n)))1_{I_1}(e(g(n+1)))\cdots 1_{I_k}(e(g(n+k)))
$$
exists.
\end{itemize}
Indeed, if (Q1) and (Q2) are satisfied, then to verify (Q) just take  $I_j$ in (Q2) to be either $I$ or $\TT\setminus I$ for appropriate $j$. Note also that (Q1) is necessary for (Q).

We first observe that (Q1) is satisfied if the sequence $(g(n))_{n=1}^\infty$ is equidistributed modulo $1$ or, equivalently, $(e(g(n)))_{n=1}^\infty$ is equidistributed in $\TT$. Recall that a sequence $(a_n)\subset \TT$ is called \emph{equidistributed} (or \emph{uniformly distributed}) \emph{in $\TT$} if for every interval $I\subset\TT$
$$
\lim_{N\to\infty} \frac{\card(n\in\{1,\ldots,N\}:\, a_n\in I)}N = \text{length}(I).
$$
Equidistribution of $(e(g(n)))_{n=1}^\infty$ in $\TT$  even occurs to be equivalent to (Q1) for subpolynomial $g\in U$, see Remark \ref{rem:Q1=equid} below.
Moreover, we have the following characterization of (Q2) in the spirit of Weyl's equidistribution criterion, see \cite[Thm. 2.1]{KN}.

\begin{proposition}\label{prop:equiv-Q}
Let $g\in B$. Then
the following assertions are equivalent.
\begin{itemize}
\item[(Q2')] For every $k\in\NN$ and every $f_0,\ldots,f_k\in C(\TT)$ the limit
$$
\limaveN f_0(e(g(n)))f_1(e(g(n+1)))\cdots f_k(e(g(n+k)))
$$
exists.
\item[(Q2'')] For every $k\in\NN$ and every  $m_0,\ldots,m_k\in\ZZ$ the limit
$$
\limaveN e(m_0g(n)+m_1g(n+1)+\cdots + m_k g(n+k))
$$
exists.
\end{itemize}
Moreover, if $(e(g(n)))_{n=1}^\infty$ is equidistributed in $\TT$, then (Q2)$\Leftrightarrow$(Q2')$\Leftrightarrow$(Q2'').
\end{proposition}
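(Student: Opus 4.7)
The plan is to establish (Q2')$\Leftrightarrow$(Q2'') without any additional hypothesis via a Stone--Weierstrass-type argument, and then, under the equidistribution hypothesis, to establish (Q2)$\Leftrightarrow$(Q2') via a sandwich bound with continuous functions.

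For (Q2')$\Rightarrow$(Q2''), I would simply take $f_j(z):=z^{m_j}$, which lies in $C(\TT)$, so the two averages coincide. For the converse (Q2'')$\Rightarrow$(Q2'), I would first observe that when each $f_j$ is a trigonometric polynomial $P_j(z)=\sum_{|m|\le M_j}c_{j,m}z^m$, expanding
$$\prod_{j=0}^k P_j(e(g(n+j)))=\sum_{m_0,\dots,m_k} \Bigl(\prod_{j=0}^k c_{j,m_j}\Bigr)\cdot e\bigl(\textstyle\sum_j m_j g(n+j)\bigr)$$
reduces the $N$-average to a finite linear combination of averages in (Q2''), which converge. For general $f_j\in C(\TT)$, I would approximate uniformly by trigonometric polynomials $P_j$ with $\|f_j-P_j\|_\infty<\e$ and use the telescoping bound $|\prod_j f_j(x_j)-\prod_j P_j(x_j)|\le C\e$ (with $C$ depending only on $k$ and the $\|f_j\|_\infty$) to conclude that the $N$-averages are Cauchy modulo $C\e$; sending $\e\to 0$ yields the limit.

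Under the equidistribution hypothesis, (Q2)$\Rightarrow$(Q2') follows by the same scheme: approximate each $f_j\in C(\TT)$ uniformly by a step function $s_j=\sum_i c_{j,i}1_{I_{j,i}}$ with intervals $I_{j,i}$, expand the product into a linear combination of averages of the form in (Q2), and close with the same telescoping bound. For the more delicate direction (Q2')$\Rightarrow$(Q2), given intervals $I_0,\dots,I_k\subset\TT$ and $\e>0$, I would sandwich each $1_{I_j}$ between $[0,1]$-valued continuous functions $\varphi_j^-\le 1_{I_j}\le \varphi_j^+$ on $\TT$ with $\int_\TT(\varphi_j^+-\varphi_j^-)\,d\lambda<\e$ (normalized Haar measure on $\TT$). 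Nonnegativity makes the product monotone factor-by-factor, so
$$\prod_j\varphi_j^-(e(g(n+j)))\le\prod_j 1_{I_j}(e(g(n+j)))\le\prod_j\varphi_j^+(e(g(n+j))).$$
By (Q2') the two outer averages converge; the pointwise estimate $\prod_j\varphi_j^+(x_j)-\prod_j\varphi_j^-(x_j)\le\sum_j(\varphi_j^+-\varphi_j^-)(x_j)$ (valid since all factors lie in $[0,1]$), combined with equidistribution of each shifted sequence $(e(g(n+j)))_n$ (which differs from $(e(g(n)))_n$ only in finitely many initial terms and is therefore also equidistributed), gives
$$\limaveN \sum_j(\varphi_j^+-\varphi_j^-)(e(g(n+j)))=\sum_j\int_\TT(\varphi_j^+-\varphi_j^-)\,d\lambda<(k+1)\e.$$
Thus the $\limsup$ and $\liminf$ of the averages of $\prod_j 1_{I_j}(e(g(n+j)))$ differ by at most $(k+1)\e$, and $\e\to 0$ finishes.

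The hard part will be the sandwich direction: one must choose the $\varphi_j^\pm$ nonnegative so the product is monotone in each factor, then exploit the bound $|\prod_j a_j-\prod_j b_j|\le\sum_j|a_j-b_j|$ for $a_j,b_j\in[0,1]$ together with equidistribution applied separately to each shifted sequence. Everything else is standard Weyl/Stone--Weierstrass machinery.
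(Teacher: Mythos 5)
Your proposal is correct and follows essentially the same route as the paper: trigonometric-polynomial approximation for (Q2')$\Leftrightarrow$(Q2''), and a continuous sandwich of the indicators combined with Weyl's criterion for the shifted (still equidistributed) sequences for (Q2')$\Rightarrow$(Q2). The only cosmetic difference is in (Q2)$\Rightarrow$(Q2'), where you use uniform approximation by step functions plus the telescoping product bound while the paper sketches a two-sided approximation by linear combinations of indicators; both work identically.
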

\begin{proof}
For $m\in\ZZ$ denote by $e_m:\TT\to\TT$ the function defined by $e_m(z):=z^m$.

(Q2')$\Rightarrow$(Q2'') follows by taking  $f_j:=e_{m_j}$. 

(Q2'')$\Rightarrow$(Q2') 
Let $f_0,\ldots,f_k\in C(\TT)$ and let $\e>0$. 
By the Weierstrass approximation theorem for trigonometric polynomials there exist functions $h_0,\ldots,h_k:\TT\to\CC$ which are linear combinations of $e_m$, $m\in \ZZ$, with $\|f_j-h_j\|_\infty<\e$ for every $j=0,\ldots,k$. Moreover, we can assume without loss of generality $\|f_j\|_\infty\leq 1$ and $\|h_j\|_\infty\leq 1$ for every $j=0,\ldots,k$.

By the triangle inequality we have for every $N,M\in\NN$
\begin{eqnarray*}
& &\left|\aveN f_0(e(g(n)))\cdots f_k(e(g(n+k))) - \aveM f_0(e(g(n)))\cdots f_k(e(g(n+k)))\right|\\
& &\ \leq 
\left|\aveN h_0(e(g(n)))\cdots h_k(e(g(n+k))) - \aveM h_0(e(g(n)))\cdots h_k(e(g(n+k)))\right|
\\
& &\qquad
+ 2(k+1)\e.
\end{eqnarray*}
Since the first term on the right hand side is by (Q2'') and linearity less than $\e$ for sufficiently large $N$ and $M$, (Q2') follows. 

\smallskip 
We now assume that $(e(g(n)))_{n=1}^\infty$ is equidistributed in $\TT$.

(Q2')$\Rightarrow$(Q2)  Let $I_0,\ldots,I_k\subset \TT$ be intervals and let $\e>0$. Let $f_j,h_j\in C(\TT)$  satisfy $0\leq f_j\leq 1_{I_j}\leq h_j\leq 1$ and $\int_\TT (h_j-f_j)<\e$ for every $j=0,\ldots,k$. We have by (Q2') and the triangle inequality
\begin{eqnarray*}
& &\limsup_{N\to\infty}\aveN 1_{I_0}(e(g(n)))\cdots 1_{I_k}(e(g(n+k))) -
\liminf_{N\to\infty}
\aveN 1_{I_0}(e(g(n)))\cdots 1_{I_k}(e(g(n+k)))\\
& &\qquad\leq
\limaveN
\left( h_0(e(g(n)))\cdots h_k(e(g(n+k))) - f_0(e(g(n)))\cdots f_k(e(g(n+k)))\right)\\
& & \qquad\leq
(k+1) \max_{j=0,\ldots,k} \limaveN (h_j-f_j)(e(g(n+j))).
\end{eqnarray*}
Since $(e(g(n+j)))_{n=1}^\infty$ is as well equidistributed in $\TT$ for every $j=0,\ldots,k$, by Weyl's equidistribution criterion the right hand side of the above is less than or equal to
$$
(k+1)\max_{j=0,\ldots,k} \int_\TT (h_j-f_j)<(k+1)\e.
$$ 
Since $\e>0$ was arbitrary, the averages
$$
\aveN 1_{I_0}(e(g(n)))\cdots 1_{I_k}(e(g(n+k)))
$$
converge. 

(Q2)$\Rightarrow$(Q2') follows analogously by 
approximating continuous functions $f_j$, $j=0,\ldots,k$, from above and below by linear combinations of characteristic functions of intervals. 
\end{proof}

\medskip

Thus, conditions (Q1) and (Q2'') imply property (Q) and, in fact, are equivalent to it. As preparation, we need the following characterization due to Boshernitzan \cite{B}, see also \cite[Remark 2.9]{EK}. 

\begin{theorem}[Properties of Hardy sequences \cite{B}]\label{thm:Bosh}
Let $g\in U$ be subpolynomial. Then the following assertions hold. 
\begin{itemize}
\item[(a)] The sequence $(e(g(n)))$ is equidistributed in $\TT$ if and only if
\begin{equation}\label{eq:hardy-equid}
\lim_{x\to+\infty} \frac{g(x)-p(x)}{\ln x}=\pm \infty
\end{equation}
for every polynomial $p$ with rational coefficients.
\item[(b)] The sequence $(e(g(n)))$ is dense in $\TT$ if and only if
\begin{equation}\label{eq:hardy-dens}
\lim_{x\to+\infty} (g(x)-p(x))=\pm \infty 
\end{equation}
for every polynomial $p$ with rational coefficients.
\item[(c)] The averages $\aveN e(g(n))$ converge if and only if either  (\ref{eq:hardy-equid}) holds or (\ref{eq:hardy-dens}) fails. Moreover, if (\ref{eq:hardy-equid}) holds, then $\limaveN e(g(n))=0$.
\end{itemize}
\end{theorem}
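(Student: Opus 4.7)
The plan is to follow Boshernitzan's original approach \cite{B}, proving (a) via Weyl's criterion and an iterated van der Corput argument, then deriving (b) and (c) by related but more elementary considerations. Throughout one exploits the Hardy-field property: every $f\in U$ is eventually monotone, $\lim_{x\to\infty}f(x)\in\RR\cup\{\pm\infty\}$ exists, and differentiation preserves the field.

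For part (a), the \emph{if} direction: by Weyl's criterion, $(e(g(n)))$ is equidistributed iff $\aveN e(kg(n))\to 0$ for every $k\in\ZZ\setminus\{0\}$. The hypothesis (\ref{eq:hardy-equid}) is preserved under $g\mapsto kg$, because rational polynomials form a $\QQ$-vector space. I would then apply van der Corput's inequality iteratively: the square of $\aveN e(kg(n))$ is controlled by a Cesaro average over $h$ of $\aveN e(k(g(n+h)-g(n)))$, and in a Hardy field $g_h(x):=g(x+h)-g(x)$ lies again in a Hardy field with its asymptotic ``degree'' reduced by one (compare Lemma \ref{L2.8}(v)). Repeated differencing with careful bookkeeping reduces to a linear base case where condition (\ref{eq:hardy-equid}) forces an irrational slope. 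For the \emph{only if} direction: if (\ref{eq:hardy-equid}) fails, the Hardy-field property yields some rational polynomial $p$ with $\lim_{x\to\infty}(g(x)-p(x))/\ln x=c\in\RR$, hence $g(n)=p(n)+c\ln n+o(\ln n)$; a direct Weyl-sum computation, using the logarithmic scale of oscillation of $n^{2\pi ic}$, shows $(e(g(n)))$ is not equidistributed.

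For part (b), density of $(e(g(n)))$ in $\TT$ is equivalent to $(g(n)\bmod 1)$ being dense in $[0,1)$. If (\ref{eq:hardy-dens}) fails, then for some rational polynomial $p$ the function $g-p$ is bounded, hence by the Hardy-field property converges to a finite limit $\beta$; then $e(g(n))$ accumulates on the finitely many points $e(p(n))e(\beta)$ and is not dense. Conversely, if (\ref{eq:hardy-dens}) holds, one combines a small modification of $g$ (to a Hardy function satisfying (\ref{eq:hardy-equid})) with part (a) and a compactness argument. For part (c), if (\ref{eq:hardy-equid}) holds, the limit is $0$ by part (a) together with Weyl's criterion applied to the characteristic function $1=e_0$. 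If (\ref{eq:hardy-dens}) fails, then as above $g(n)=p(n)+\beta+o(1)$ with $p$ a rational polynomial, and $e(g(n))$ is asymptotic to the periodic sequence $e(p(n))e(\beta)$, whose Cesaro means converge. In the remaining case (both (\ref{eq:hardy-equid}) and (\ref{eq:hardy-dens}) hold simultaneously impossible conditions to all fail the other way), namely (\ref{eq:hardy-equid}) fails and (\ref{eq:hardy-dens}) holds, we have $g(n)=p(n)+c\ln n+o(\ln n)$ with $c\ne 0$, and the Cesaro averages of $e(p(n))\,n^{2\pi ic}$ oscillate on a logarithmic scale and do not converge.

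The main obstacle is the van der Corput iteration in part (a): at each stage one must verify that the differenced function still lies in a Hardy field and retains the derivative control needed to continue, and that the induction terminates at a tractable linear base case where (\ref{eq:hardy-equid}) translates into irrationality of the slope. This is precisely where the full strength of the Hardy-field framework, rather than only the axiomatic conditions of Definition \ref{defPm}, is required.
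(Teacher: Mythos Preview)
The paper does not prove Theorem~\ref{thm:Bosh}; it is quoted from Boshernitzan~\cite{B} (with a pointer also to \cite[Remark~2.9]{EK}) and used as a black box. So there is no ``paper's own proof'' to compare against; your proposal is effectively a sketch of the argument in~\cite{B}.

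As a sketch it captures the right architecture (Weyl's criterion plus iterated differencing for (a), and the trichotomy in (c) governed by whether the residual $g-p$ is of logarithmic, sublogarithmic, or bounded type), but there is one genuine technical gap you should flag explicitly. You assert that $g_h(x)=g(x+h)-g(x)$ ``lies again in a Hardy field'' and cite Lemma~\ref{L2.8}(v). Neither is correct as stated: Hardy fields are not closed under translation in general, and Lemma~\ref{L2.8}(v) concerns the ad hoc classes $P_m$, not $U$. The workaround (used in the present paper as Lemma~\ref{lem:hardy-shift}, and in~\cite{B} in a sharper form) is that for subpolynomial $g\in U$ one has $g(\cdot+h)=\sum_{j\le k} h^j g^{(j)}/j!+o(1)$, so the differenced function agrees with an element of the \emph{same} Hardy field up to $o(1)$; the van der Corput step then goes through. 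Without making this point precise your induction in (a) does not close. A smaller issue: in the ``only if'' direction of (a) and in (c), when $(g-p)/\ln x\to c$ you should distinguish $c\ne 0$ from $c=0$; in the latter case $g-p=o(\ln x)$ need not be bounded, and the non-equidistribution (resp.\ convergence of the averages) requires a separate argument, which Boshernitzan supplies.
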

\begin{remark}[Rational polynomials] 
We see that property (Q1) (and hence (Q)) fails if $\lim_{x\to+\infty} (g(x)-p(x))$ is finite for some polynomial $p$ with rational coefficients. Note that for such $g$ the weighted ergodic averages 
$$
\aveN e(g(n))T^{[f(n)]+h_n}
$$
converge strongly for every $f\in P_m,m\in\NN,$ bounded $(h_n)\subset \ZZ$ and power bounded Hilbert space operator $T$ without unimodular eigenvalues by different reasons. Indeed, the sequence $(e(g(n)))$ is periodic in this case and convergence of the above weighted averages follows from Theorem \ref{thm:subseq} applied to infinite arithmetic progressions $A$ and the functions $f(a\cdot+b)\in P_m$ for suitable $a,b\in\NN$. 


The case of general polynomials $g$ is treated in Theorem \ref{thm:weighted-pol} and Corollary \ref{cor:pol} below.
\end{remark}

\begin{remark}\label{rem:Q1=equid}
As a corollary of Theorem \ref{thm:Bosh},  for subpolynomial $g\in U$ equidistribution of $(e(g(n)))$ in $\TT$ is equivalent to (Q1) and is  necessary for (Q). Indeed, if $(e(g(n)))$ is not equidistributed in $\TT$, then the limit $\limaveN e(g(n))$ either does not exist or equals to zero, both contradicting (Q1)  for $I=\TT$. Thus, by Proposition \ref{prop:equiv-Q}, (Q) is equivalent to the properties 
(\ref{eq:hardy-equid}) and (Q2'') for subpolynomial $g\in U$.
\end{remark}

\begin{example}
Consider $g\in U$ given by $g(x):=x\ln x$. Then $(e(g(n)))$ is equidistributed in $\TT$ by  Theorem \ref{thm:Bosh}(a), but the sequence $(e(\tilde{g}(n)))$ for $\tilde{g}(x):=g(x+1)-g(x)$ fails to converge in the Ces\`{a}ro sense by Theorem \ref{thm:Bosh}(c) since $g(x+1)-g(x)= g'(x)+o(1)=1+\ln x+o(1)$ by $g''=o(1)$.  
Therefore $g$ satisfies property (Q1) but fails to satisfy property (Q). Analogously, every function of the form $x\mapsto x^k\ln x$, $k\in\NN$, has the same property by considering the appropriate linear combination of $g(x),g(x+1),\ldots,g(x+k)$. Note that $x\mapsto \ln x$ satisfies neither (Q) nor (Q1).
\end{example}

We need the following simple property of subpolynomial Hardy functions. 
\begin{lemma}\label{lem:hardy-shift}
Let $g\in U$ be subpolynomial and consider $\tilde{g}$ given by $\tilde{g}(\cdot):=g(\cdot+1)$. Then $\tilde{g}=h+o(1)$ for some $h\in U$ belonging to the same Hardy field as $g$.
\end{lemma}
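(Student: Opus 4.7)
The plan is to Taylor-expand $g$ at $t$. Since $g\in U$ is subpolynomial, pick $n\in\NN$ with $g\prec t^n$ (if $|g(t)|\ll t^{n_0}$ then $g/t^{n_0}\in U$ is bounded and therefore has a finite limit by the existence-of-limits property of Hardy-field germs, so $g\prec t^{n_0+1}$). Set
\[
h(t):=\sum_{k=0}^n \frac{g^{(k)}(t)}{k!}.
\]
Since the Hardy field containing $g$ is closed under differentiation and under algebraic operations, $h$ belongs to that same Hardy field, and in particular $h\in U$. Taylor's theorem with Lagrange remainder (applicable since Hardy-field germs are $C^{\infty}$ eventually) yields, for each sufficiently large $t$, some $\xi_t\in(t,t+1)$ with
\[
\tilde g(t)=g(t+1)=h(t)+\frac{g^{(n+1)}(\xi_t)}{(n+1)!}.
\]
It thus suffices to show $g^{(n+1)}(\xi_t)\to 0$ as $t\to\infty$.

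The key technical ingredient is the following comparison principle for derivatives in Hardy fields: if $f\in U$ and $f\prec t^\alpha$ for some $\alpha\in\RR$, then $f'\prec t^{\alpha-1}$. To prove it, set $u:=f/t^\alpha\in U$, so $u\to 0$, and note the identity $f'/t^{\alpha-1}=t\,u'(t)+\alpha\, u(t)$. Since $t\,u'\in U$ lies in a Hardy field, it admits a limit $L\in\RR\cup\{\pm\infty\}$. If $L>0$ or $L=+\infty$, then $u'(t)\ge c/t$ eventually for some $c>0$, so integration would force $u(t)\to+\infty$, contradicting $u\to 0$; the case $L<0$ or $L=-\infty$ is ruled out symmetrically. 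Hence $L=0$, giving $f'/t^{\alpha-1}\to 0$.

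Iterating this lemma $n+1$ times starting from $g\prec t^n$ yields $g^{(n+1)}\prec t^{-1}$; in particular $g^{(n+1)}(s)\to 0$ as $s\to\infty$. Because $\xi_t>t\to\infty$, we obtain $g^{(n+1)}(\xi_t)\to 0$, and therefore $\tilde g(t)-h(t)\to 0$, as required. The step I expect to be the main technical obstacle is the derivative-comparison lemma: although it is intuitively clear from the Hardy-field fact that every element is eventually monotonic with a limit in $\RR\cup\{\pm\infty\}$, its rigorous justification depends crucially on ruling out every nonzero or infinite value of $\lim_{t\to\infty}t\,u'(t)$, and this argument must be carried out inside the ambient Hardy field rather than for arbitrary smooth $u$ (for which the implication is false).
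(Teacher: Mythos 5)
Your proof is correct and follows essentially the same route as the paper: Taylor expansion of $g(t+1)$ at $t$ to finite order, with the Lagrange remainder killed because the relevant higher derivative of a subpolynomial Hardy function tends to $0$. The only difference is that you spell out the derivative-comparison step ($f\prec t^\alpha\Rightarrow f'\prec t^{\alpha-1}$), which the paper simply invokes via its earlier L'Hospital-rule proposition; your justification of it inside the Hardy field is sound.
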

\begin{proof}
We can assume that $g\in U_+$ and let $k\in \NN$ be such that $g(t)\prec t^{k+1}$. Then 
$g^{(k+1)}=o(1)$, so 
for $t$ large enough 
$$
\tilde{g}(t)=g(t+1)= g(t)+g'(t)+\ldots+\frac{g^{(k)}(t)}{k!}  
+o(1)
$$
holds. The assertion follows.
\end{proof}

We now introduce the following classes of Hardy functions. For $l\in \NN_{0}$ denote 
$$
M_l:=\{g\in U_+:\ t^l\ln t\prec g\prec t^{l+1}\}.
$$
The following characterises Hardy functions satisfying (Q). Without loss of generality we consider (eventually) positive functions. 
\begin{theorem}[Property (Q) for Hardy functions]\label{thm:Q-hardy}
Let $g\in M_l$ for some $l\in \NN_{0}$. Then $g$ satisfies (Q).
\end{theorem}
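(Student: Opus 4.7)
The plan is to verify the two conditions (Q1) and (Q2'') for $g$, since by Proposition \ref{prop:equiv-Q} together with Remark \ref{rem:Q1=equid} these jointly imply (Q). By Theorem \ref{thm:Bosh}(a) condition (Q1) amounts to showing $(g-p)/\ln t\to\pm\infty$ for every polynomial $p$ with rational coefficients. I split on $\deg p$: if $\deg p\le l$ then $p\prec t^l\ln t\prec g$, so $(g-p)/\ln t\sim g/\ln t$, and $g/\ln t\to\infty$ because $g\succ t^l\ln t$; if $\deg p\ge l+1$ then the leading term of $p$ dominates $g$, so $(g-p)/\ln t\sim -p/\ln t\to\pm\infty$. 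This proves (Q1).

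For (Q2'') I fix $k\in\NN$ and $m_0,\dots,m_k\in\ZZ$ and set
\[
\phi(t):=\sum_{j=0}^k m_j\,g(t+j),\qquad c_i:=\sum_{j=0}^k m_j\,j^i.
\]
Using Lemma \ref{lem:hardy-shift} and the Taylor expansion
\[
g(t+j)=\sum_{i=0}^l\frac{j^i}{i!}\,g^{(i)}(t)+\frac{j^{l+1}}{(l+1)!}\,g^{(l+1)}(\xi_{t,j}),\qquad \xi_{t,j}\in(t,t+j),
\]
together with the fact that $g\prec t^{l+1}$ forces $g^{(l+1)}\to 0$ (iterate the paper's proposition on derivatives of functions dominated by powers of $t$), one obtains $\phi(t)=\tilde H(t)+o(1)$, where
\[
\tilde H(t):=\sum_{i=0}^l\frac{c_i}{i!}\,g^{(i)}(t)
\]
lies in the Hardy field containing $g$.

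I then case-split on $i_0:=\min\{i\in\{0,\dots,l\}:c_i\ne 0\}$, with the convention $i_0:=\infty$ if no such index exists. If $i_0=\infty$ then $\tilde H\equiv 0$, hence $\phi(n)\to 0$, $e(\phi(n))\to 1$, and the Ces\`aro averages converge to $1$. Otherwise, the sub-claim $g^{(i)}\in M_{l-i}$ for $0\le i\le l$ (with the convention $M_0:=\{h\in U_+:\ln t\prec h\prec t\}$) implies that the dominant term of $\tilde H$ is $c_{i_0}g^{(i_0)}/i_0!$, so $|\tilde H|\in M_{l-i_0}$. Running the (Q1) argument with $\tilde H$ in place of $g$ verifies Boshernitzan's criterion \eqref{eq:hardy-equid} for $\tilde H$, so Theorem \ref{thm:Bosh}(c) gives $\aveN e(\tilde H(n))\to 0$, and this transfers to $\phi$ because $\phi(n)-\tilde H(n)\to 0$. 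In both cases the averages converge, yielding (Q2'').

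The only non-routine step, and hence the main obstacle, is the sub-claim $g^{(i)}\in M_{l-i}$ for $0\le i\le l$. The upper bound $g^{(i)}\prec t^{l+1-i}$ follows by applying the paper's proposition on derivatives $i$ times to $g\prec t^{l+1}$. The lower bound $g^{(i)}\succ t^{l-i}\ln t$ is the delicate part, because $t^l\ln t$ is not a pure power of $t$ and so is not handled directly by that proposition. I plan to prove it by induction on $i$: assuming $g^{(i)}\succ t^{l-i}\ln t$ with $l-i\ge 1$, the ratio $g^{(i+1)}/(t^{l-i-1}\ln t)$ is monotonic in the ambient Hardy field, so its limit exists in $[0,\infty]$; if that limit were finite, then $g^{(i+1)}(s)\le C\,s^{l-i-1}\ln s$ eventually, and integrating from a large reference point using $\int^t s^{l-i-1}\ln s\,ds\sim t^{l-i}\ln t/(l-i)$ would yield $g^{(i)}(t)\le\mathrm{const}\cdot t^{l-i}\ln t$, contradicting the inductive hypothesis. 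Hence the limit is $\infty$, i.e., $g^{(i+1)}\succ t^{l-i-1}\ln t$, closing the induction.
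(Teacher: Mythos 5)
Your proof is correct, and it rests on the same pillars as the paper's argument: reduce (Q) to equidistribution of $(e(g(n)))$ together with condition (Q2'') via Proposition \ref{prop:equiv-Q}, invoke Boshernitzan's criterion (Theorem \ref{thm:Bosh}), and expand the linear combination $\sum_{j}m_jg(\cdot+j)$ in terms of the derivatives $g',\dots,g^{(l)}$ using $g^{(i)}\in M_{l-i}$ and $g^{(l+1)}=o(1)$. The difference is in how (Q2'') is organized: the paper telescopes the combination into increments $g(\cdot+j+1)-g(\cdot+j)$, separates the term $s\,g(t)$ with $s=\sum_j m_j$, and then runs a somewhat informal induction ("on $k$, considering the two cases in every step"), whereas you Taylor-expand each $g(t+j)$ at $t$ once, read off the coefficients $c_i=\sum_j m_jj^i$, and identify the dominant term $c_{i_0}g^{(i_0)}/i_0!$ directly; this eliminates the induction and makes the dichotomy (all $c_i=0$, so $\phi=o(1)$ and the averages tend to $1$, versus $|\tilde H|\in M_{l-i_0}$, so the averages tend to $0$) completely explicit. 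You also supply a proof of the lower bound $g^{(i)}\succ t^{l-i}\ln t$, which the paper asserts via the L'Hospital rule without detail; your integration-and-contradiction argument is a valid substitute, being the "reverse L'Hospital" available in Hardy fields because the relevant limits always exist. One small point handled correctly but worth keeping visible: when $c_{i_0}<0$ you must pass to $|\tilde H|\in M_{l-i_0}$ before citing the definition of $M_{l-i_0}\subset U_+$, while the criterion (\ref{eq:hardy-equid}) itself is insensitive to the sign.
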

\begin{proof}
%
%
%
%
%
Assume that $g\in M_l$ for some $l\in \NN_0$. Then $(e(g(n)))$ is equidistributed in $\TT$ by Theorem \ref{thm:Bosh}(a) and it remains to show (Q2'') by Proposition \ref{prop:equiv-Q}. Take $k\in \NN$ and $m_0,\ldots,m_k\in \ZZ$. Write for $s:=m_0+\ldots+m_k$
\begin{eqnarray*}
\tilde{g}(t)&:=&m_0g(t)+\ldots+m_kg(t+k)\\
&=&m_k(g(t+k)-g(t+k-1))+(m_k+m_{k-1})(g(t+k-1)-g(t+k-2))+\ldots\\
& &+(m_k+\ldots+m_1)(g(t+1)-g(t))+sg(t).
\end{eqnarray*}
Note that by the L'Hospital rule $t^{l-1}\ln t\prec g'\prec t^{l}$,$\ldots$,$ \ln t\prec g^{(l)}\prec t$, and $g^{(l+1)}=o(1)$. So 
we have 
\begin{eqnarray*}
\tilde{g}(t)&=&
m_k g'(t+k-1) + (m_k+m_{k-1})g'(t+k-2)+\ldots + (m_k+\ldots+m_1)g'(t) \\
&\ &+ m_k g''(t+k-1) + (m_k+m_{k-1})g''(t+k-2)+\ldots + (m_k+\ldots+m_1)g''(t)+\ldots \\
&\ &+m_k g^{(l)}(t+k-1) + (m_k+m_{k-1})g^{(l)}(t+k-2)+\ldots+ (m_k+\ldots+m_1)g^{(l)}(t)   \\
&\ &
+ o(1)+sg(t) \\
&=&: h(t)+ sg(t)+o(1).
\end{eqnarray*}
By Lemma \ref{lem:hardy-shift} we can assume without loss of generality that $h$ and $\tilde{g}$ are Hardy functions from the same Hardy field as $g$.

We now consider the following cases. 

Case 1: $s\neq 0$. Since $h\prec t^l$, we have $\tilde{g}\in M_l$ and therefore the averages $\aveN e(\tilde{g}(n))$ converge by the equidistribution property and Weyl's criterion. 

Case 2: $s= 0$. If $l=0$ then $g'=o(1)$ and the averages $\aveN e(\tilde{g}(n))$ clearly converge. If $l\geq 1$, then $g'\in M_{l-1}$,$\ldots,g^{(l)}\in M_0$ and $\tilde{g}$ is of the form 
\begin{eqnarray*}
\tilde{g}(t)&=&\tilde{m}_0g'(t)+\ldots + \tilde{m}_{k-1}g'(t+k-1)+\ldots   \\
&\ &+\tilde{m}_0 g^{(l)}(t)+  \ldots 
+ \tilde{m}_{k-1}g^{(l)}(t+k-1)+o(1).
\end{eqnarray*}
Using Lemma \ref{lem:hardy-shift}, by induction on $k$ and considering the two cases in every step we obtain that the averages $\aveN e(\tilde{g}(n))$ converge. Property (Q) follows, completing the proof. 
\end{proof}
The following shows in particular that the converse implication in Theorem \ref{thm:Q-hardy} for functions in $U_+$ does not hold in general. 
\begin{remark}
For most $g\in U_+\notin \bigcup_{l\in\NN\cup\{0\}}M_l$ property (Q) fails but sometimes it  holds.  There are several cases to consider.

Case 1: $0\leq g\precsim \ln t$.
Then $(e(g(n)))$ is not equidistributed in $\TT$ by Theorem \ref{thm:Bosh}(a) and (Q) fails. 

Case 2: There exists $l\in \NN$ with $t^l\prec g\precsim t^l\ln t$. Then by the L'Hospital rule $1\prec g^{(l)}\precsim \ln n$ and the averages $\aveN e(g^{(l)}(n))$ diverge by Theorem \ref{thm:Bosh}(c). It remains to find $m_0,\ldots, m_l$ so that $m_0g(t)+\ldots+m_lg(t+l) = g^{(l)}(t)+o(1)$. This is clearly possible by discrete approximation of the derivative(s) and using the fact that $g^{(l+1)}=o(1)$. Thus, (Q) fails.

Case 3: $g\sim t^l$ for some $l\in\NN$. Here, the situation is not homogeneous. For $g$ given by $g(t)=t^l$ or $g(t)=t^l+\ln t$,  $(e(g(n)))$ is not equidistributed in $\TT$ by Theorem \ref{thm:Bosh}(a) and (Q) fails. On the other hand, for $g$ given by $g(t)=t^l+(\ln t)^2$, property (Q) holds. Indeed, $(e(g(n)))$ is equidistributed in $\TT$ by Theorem \ref{thm:Bosh}(a) implying (Q1). Consider a linear combination $\tilde{g}$ of $g(\cdot)$,$g(\cdot + 1),\ldots$. As in the proof of Theorem \ref{thm:Q-hardy}, since $g^{(j)}=l(l-1)\cdots (l-j+1)t^{l-j}+o(1)$ for $j\in\{1,\ldots,l\}$, $\tilde{g}$ is up to $o(1)$ either a rational polynomial or a rational polynomial plus a constant times $\ln^2t$. In both cases, $(e(g(n)))$ is Ces\`aro convergent by 
 Theorem \ref{thm:Bosh} (c) 
implying (Q2'').
\end{remark}

Thus Theorems \ref{thm:main-weighted} and \ref{thm:Q-hardy} imply the following weighted ergodic theorem. 

\begin{corollary}\label{cor:weighted}
Let $T$ be a power bounded operator acting on a Hilbert space $H$, $\si_p(T)\cap\TT=\emptyset$, let $g\in M_l$ for some $l\in\NN_0$ and let $f\in P_m$ for some $m\in\NN$.
Then
$$
(SOT)-\lim_{N\to\infty}\frac{1}{N}\sum_{n=1}^N e^{2\pi i g(n)}T^{[f(n)]}=0.
$$
\end{corollary}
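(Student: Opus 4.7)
The plan is to obtain this corollary as a direct combination of the two main theorems of the preceding section. All substantive work has already been carried out: Theorem \ref{thm:Q-hardy} provides property $(Q)$ for the weight $g$, and Theorem \ref{thm:main-weighted} then upgrades $(Q)$ to strong convergence to zero of the weighted subsequential ergodic averages. So the corollary is essentially a transitive composition of hypotheses.

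Concretely, I would proceed as follows. First, verify that the hypothesis of Theorem \ref{thm:Q-hardy} holds: by assumption $g\in M_l$ for some $l\in\NN_0$, which is exactly what Theorem \ref{thm:Q-hardy} requires. Invoking it yields that $g$ satisfies property $(Q)$, i.e., for every interval $I\subset\TT$ the set $\{n\in\NN:\, e(g(n))\in I\}$ is regular. Second, observe that the remaining hypotheses of Theorem \ref{thm:main-weighted} are satisfied by the assumption of the corollary: $T$ is a power bounded operator on the Hilbert space $H$ with $\sigma_p(T)\cap\TT=\emptyset$, and $f\in P_m$ for some $m\in\NN$. Applying Theorem \ref{thm:main-weighted} to this data gives exactly
$$
(SOT)-\lim_{N\to\infty}\frac{1}{N}\sum_{n=1}^N e(g(n))\,T^{[f(n)]}=0,
$$
which, with $e(t)=e^{2\pi i t}$, is the statement to be proved.

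There is no real obstacle here; the two ingredients fit together without any additional argument. The only thing worth pointing out is that the notation $e(g(n))=e^{2\pi i g(n)}$ is used interchangeably in the paper, so one should match the conventions when writing the chain of equalities. All the difficulty is already absorbed into Theorem \ref{thm:Q-hardy} (where property $(Q)$ for Hardy functions in $M_l$ is established through equidistribution together with the (Q2'') criterion) and into Theorem \ref{thm:main-weighted} (where the weighted averages are estimated by partitioning $\TT$ into small arcs $I_k$, freezing the weight on each arc to a constant $\lambda_k$, and applying the subsequential ergodic Theorem \ref{thm:subseq} on each regular set $A_k=\{n:e(g(n))\in I_k\}$).
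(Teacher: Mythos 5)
Your proposal is correct and matches the paper exactly: the corollary is stated there as an immediate consequence of Theorem \ref{thm:Q-hardy} (which gives property $(Q)$ for $g\in M_l$) combined with Theorem \ref{thm:main-weighted}, with no additional argument needed. Nothing to add.
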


We finally consider
polynomial weights which were excluded in Corollary \ref{cor:weighted}. 

\begin{theorem}\label{thm:weighted-pol}
Let $T\in B(H)$ be power bounded with $\sigma_p(T)\cap \TT=\emptyset$, $f\in P_m$ for some $m\in \NN$. Then 
$$
(SOT)-\limaveN e(g(n))T^{[f(n)]}=0
$$
holds for every $g$ of the form $g(t)=\sum_{j=0}^kc_jt^{\alpha_j}$, $k\in\NN_0$, $c_0,\ldots,c_k, \alpha_0,\ldots,\al_k\in\RR$.
\end{theorem}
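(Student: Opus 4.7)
The plan is to reduce to Theorem \ref{thm:main-weighted} via an arithmetic-progression decomposition, then verify property (Q) on each progression with the aid of Boshernitzan's characterization (Theorem \ref{thm:Bosh}).

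First I would absorb the trivial summands of $g$: terms $c_jt^{\alpha_j}$ with $\alpha_j=0$ give a constant unit factor $e(c_j)$, while those with $\alpha_j<0$ give $e(c_jn^{\alpha_j})=1+o(1)$, contributing $o(1)$ to the Ces\`aro sum by power-boundedness of $T$; thus without loss of generality $\alpha_j>0$ for all $j$. I would then split $g=g_H+g_{P,\mathrm{irr}}+g_{P,\QQ}$ into (i) non-integer-exponent terms, (ii) polynomial terms with irrational coefficient, and (iii) polynomial terms with rational coefficient, and let $L\in\NN$ be a common denominator of the rational coefficients in $g_{P,\QQ}$. A direct expansion of $(Lm+r)^{\alpha_j}$ shows $g_{P,\QQ}(Lm+r)\equiv g_{P,\QQ}(r)\pmod\ZZ$, so that $e(g_{P,\QQ}(n))$ is constant on each arithmetic progression $\{n\equiv r\pmod L\}$. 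Splitting the Ces\`aro sum over these $L$ progressions and reparametrising $n=Lm+r$, it suffices to show, for each $r$, that
$$
\lim_{M\to\infty}\frac1M\sum_{m=1}^Me(\tilde G(m))T^{[\tilde f(m)]}x=0,
$$
where $\tilde G(m):=g_H(Lm+r)+g_{P,\mathrm{irr}}(Lm+r)$ and $\tilde f(m):=f(Lm+r)\in P_m$ (the latter by a rescaling in Definition \ref{defPm}).

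The plan is then to verify property (Q) for $\tilde G$ and invoke Theorem \ref{thm:main-weighted}. Since $\tilde G$ is a finite sum of real powers of $Lm+r$ it lies in a (subpolynomial) Hardy field, and Theorem \ref{thm:Bosh}(c) reduces (Q1) to checking Boshernitzan's condition (\ref{eq:hardy-equid}) for $\tilde G$, and (Q2'') to the analogous check (or else the failure of (\ref{eq:hardy-dens})) for every $\ZZ$-linear combination $F(t):=\sum_{j=0}^Km_j\tilde G(t+j)$. The polynomial part of $F$ has top coefficient $(\sum_jm_j)a_k$, where $a_k=c_kL^k$ is the (irrational) leading coefficient of $g_{P,\mathrm{irr}}(Lm+r)$; when this is nonzero, $F-p$ is dominated by an irrational polynomial asymptotic for every rational polynomial $p$, condition (\ref{eq:hardy-equid}) holds trivially, and the Ces\`aro limit is $0$. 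If that coefficient vanishes one inspects the next, $a_kk(\sum_jjm_j)$, and so on, a process terminating after at most $\deg\tilde G+1$ steps; simultaneously the Hardy tail $\sum_jm_jg_H(L(\cdot+j)+r)$ is controlled by Taylor-expanding $(Lt+Lj+r)^{\alpha_l}$ around $Lt+r$ (cf.\ Lemma \ref{lem:hardy-shift}), its leading exponent dropping by one at each cancellation step. In the fully degenerate case every leading term cancels, $F(t)-{\rm const}\to0$, and Ces\`aro convergence of $e(F(n))$ is immediate.

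The hard part will be the verification of these Boshernitzan conditions for $F$. The main technical burden is to simultaneously bookkeep the cancellations in both the polynomial leading terms (a classical Weyl-type finite-difference calculation) and the non-integer-power corrections from $g_H$, and to confirm that after each finite-difference step the resulting function remains of the ``sum-of-real-powers'' type in $U$ so that Theorem \ref{thm:Bosh}(c) continues to apply at the next level of the induction.
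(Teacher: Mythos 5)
Your proposal is correct and follows essentially the same route as the paper: reduce the rational polynomial part of $g$ to a constant by passing to arithmetic progressions (using that $f(a\cdot+b)\in P_m$), verify property (Q) for the remaining irrational/non-integer part via Boshernitzan's criterion together with Weyl-type finite differencing of the linear combinations $\sum_j m_j g(\cdot+j)$, and conclude with Theorem \ref{thm:main-weighted}. The paper organizes this as an induction on the leading integer exponent with a rational/irrational case split at each step rather than your one-shot common-denominator decomposition, but the ingredients are identical; the only point to add is that when $\tilde G$ is constant (i.e.\ $g$ is a rational polynomial) property (Q) fails as literally defined, and one should then conclude directly from Theorem \ref{thm:subseq} on each progression, as the paper does.
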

\begin{proof}
Without loss of generality we can assume that $c_0,\ldots,c_k\neq 0$ and $\alpha_0>\alpha_1>\ldots>\alpha_k$. Moreover, by Theorem \ref{thm:subseq} applied to $A=\NN$ we can assume that $\al_0\geq 0$. 

If $\alpha_0\notin \NN_0$, the assertion follows from Corollary \ref{cor:weighted}. So we can assume that $\alpha_0=l\in \NN_0$. We proceed by induction on $l$. The induction basis $l=0$ follows from Theorem \ref{thm:subseq} applied to $A=\NN$. Let now $l\in \NN$ and assume that the assertion holds for smaller powers.
There are two cases to consider.

Case 1: $c_0$ is rational. Since $(e(c_0n^l))$ is periodic, by going to arithmetic progressions (and again using that $f(a\cdot+b)\in P_m$ for all $a,b\in\NN$) we can assume without loss of generality that the term $c_0n^l$ is not there. If $\al_1\notin \NN_0$, the assertion follows from  Corollary \ref{cor:weighted}, otherwise it follows from the induction hypothesis. 

Case 2: $c_0$ is irrational. Then $(e(g(n)))$ is equidistributed by Theorem \ref{thm:Bosh} (a) implying (Q1). Let $\tilde{g}(\cdot):=m_0g(\cdot)+m_1g(\cdot+1)+\ldots+m_Kg(\cdot+K)$ for arbitrary $K\in\NN_0$ and $m_0,\ldots,m_K\in \ZZ$.  
If $m_0+\ldots+m_K\neq 0$, then $(e(\tilde{g}(n)))$ is equidistributed by Theorem \ref{thm:Bosh} (a) implying (Q2''), and the assertion follows from Corollary \ref{cor:weighted}. 
If $m_0+\ldots+m_K=0$, then, by the same argument as in the proof of  Theorem \ref{thm:Q-hardy},  the leading term of $\tilde{g}$ is (up to $o(1)$) a linear combination of the derivatives of $g$ which are of the same form as $g$ with powers decreased by $1$. Repeating the argument, Ces\`aro convergence of $(e(\tilde{g}(n)))$ easily follows by induction and Theorem \ref{thm:Bosh} (a). 

The assertion follows now from Theorem \ref{thm:main-weighted}.
\end{proof}

\begin{remark}
Using the same techniques, one can replace $g(t)=\sum_{j=0}^kc_jt^{\alpha_j}$ in Theorem \ref{thm:weighted-pol} by $g(t)=\sum_{j=0}^kc_j(t+b_j)^{\alpha_j}$ for $k\in\NN_0$, $c_0,\ldots,c_k, b_0,\ldots,b_k, \alpha_0,\ldots,\al_k\in\RR$. We leave the details to the reader. 
Moreover, for real polynomials the argument in the above proof simplifies due to Weyl's equidistribution theorem for polynomials or periodicity reasons, respectively. 
\end{remark}

In particular, we have the following generalization of the result of ter Elst, M\"uller \cite{tEM}.

\begin{corollary}[Convergence of polynomial averages with polynomial weights]\label{cor:pol}
Let $T\in B(H)$ be power bounded. Then the weighted averages 
\begin{equation}\label{eq:poly}
\aveN e(q(n))T^{[p(n)]}
\end{equation}
converge strongly for every real polynomials $p,q\in\RR[\cdot]$.
\end{corollary}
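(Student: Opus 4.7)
The plan is to reduce to Theorem~\ref{thm:weighted-pol} on $X_2$ via the Jacobs--Glicksberg--deLeeuw decomposition $H=X_1\oplus X_2$ recalled in the introduction, and to treat $X_1$ by scalarization. First I dispose of the trivial cases of $p$: if $p$ is constant or has negative leading coefficient, the convention on $T^{[p(n)]}$ makes the operator eventually equal to a fixed one ($T^{[p]}$ or $I$ respectively), and the averages then reduce, up to finitely many terms, to $\frac{1}{N}\sum_{n=1}^N e(q(n))$ times that fixed operator, which converges by Weyl's equidistribution theorem. Otherwise $p$ has positive leading coefficient and degree $m\ge 1$, so $p\in P_m$, and Theorem~\ref{thm:weighted-pol} gives strong convergence (to zero) on $X_2$.

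On $X_1$, the partial averages are uniformly bounded in operator norm by the power bound of $T$, and $X_1$ is the closed linear span of unimodular eigenvectors, so it suffices to prove convergence on each eigenvector $x$ with $Tx=\lambda x$ for some $\lambda=e(\theta)\in\TT$. The problem reduces to showing convergence of the scalar averages $S_N:=\frac{1}{N}\sum_{n=1}^N e(q(n)+\theta[p(n)])$. Using $[p(n)]=p(n)-\{p(n)\}$, rewrite $S_N=\frac{1}{N}\sum_{n=1}^N e(r(n))\phi(p(n))$, where $r(t):=q(t)+\theta p(t)\in\RR[t]$ and $\phi:\TT\to\CC$ is defined by $\phi(s):=e(-\theta s)$ for $s\in[0,1)$ extended $1$-periodically. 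Note that $\phi$ is bounded and Riemann integrable, but in general discontinuous at $0\in\TT$ when $\theta\notin\ZZ$.

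Convergence of $(S_N)$ is then established in two cases. If all non-constant coefficients of $p$ are rational with common denominator $d$, then $\{p(n)\}$ is periodic of period $d$, and splitting the sum along the arithmetic progressions $n\equiv j\pmod d$ reduces the problem to Ces\`aro averages of $e(\tilde r_j(k))$ for the real polynomials $\tilde r_j(k):=q(j+kd)+\theta p(j+kd)$ in $k$; these converge by Weyl. Otherwise $(p(n))$ is equidistributed modulo $1$, again by Weyl, and one approximates $\phi$ in $L^2(\TT)$ by a trigonometric polynomial $\phi_\e(s)=\sum_{|k|\le K_\e}c_ke(ks)$ with $\|\phi-\phi_\e\|_{L^2}<\e$. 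Then
$$
\frac{1}{N}\sum_{n=1}^N e(r(n))\phi_\e(p(n))=\sum_{|k|\le K_\e}c_k\cdot\frac{1}{N}\sum_{n=1}^N e(r(n)+kp(n))
$$
converges as $N\to\infty$ by Weyl applied to each real polynomial $r+kp$, while Cauchy--Schwarz combined with equidistribution of $(p(n))$ applied to the Riemann integrable function $|\phi-\phi_\e|^2$ bounds the remaining error by $\e+o(1)$. Thus $(S_N)$ is Cauchy and hence converges.

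The main subtlety is the discontinuity of $\phi$ at $0\in\TT$ when $\theta\notin\ZZ$, which precludes uniform approximation of $\phi$ by trigonometric polynomials; this is bypassed by the $L^2$-approximation plus Cauchy--Schwarz estimate sketched above, relying on the equidistribution of $(p(n))$ in the irrational case and on periodicity of $\{p(n)\}$ in the rational case.
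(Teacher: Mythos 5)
Your proposal is correct, and it follows the paper's reduction exactly up to the last step: both decompose $H=X_1\oplus X_2$ via Jacobs--Glicksberg--deLeeuw, dispose of $X_2$ by Theorem \ref{thm:weighted-pol} (after noting that a non-constant real polynomial with positive leading coefficient lies in $P_m$; your explicit treatment of constant $p$ and negative leading coefficient is a small point the paper glosses over), and reduce $X_1$ by uniform boundedness and density of eigenvectors to the scalar averages $\aveN e(q(n)+\theta[p(n)])$. Where you diverge is in proving convergence of these scalar averages: the paper observes that $t\mapsto q(t)+\theta[p(t)]$ is a generalized polynomial and cites Bergelson--Leibman \cite[Corollary 0.26]{BL} as a black box, whereas you give a self-contained elementary argument, factoring $e(\theta[p(n)])=e(\theta p(n))\,\phi(p(n))$ with $\phi(s)=e(-\theta s)$ on $[0,1)$ extended periodically, and then splitting into the two Weyl cases: periodicity of $(\{p(n)\})$ when all non-constant coefficients of $p$ are rational, versus equidistribution of $(p(n))$ mod $1$ otherwise, handled by $L^2$-approximation of the Riemann-integrable $\phi$ by trigonometric polynomials together with Cauchy--Schwarz and the extension of Weyl's criterion to Riemann-integrable functions. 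Both routes are valid; yours buys a proof from first principles tailored to the single-bracket expression at hand, while the paper's citation is shorter and covers arbitrary generalized polynomials (iterated integer parts, etc.) at the price of invoking a substantially deeper external result.
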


\begin{proof}
Let $H=H_1\oplus H_2$ be the Jacobs-Glicksberg-deLeeuw decomposition discussed in the introduction. Convergence of averages (\ref{eq:poly}) on $H_2$ follows directly from Theorem \ref{thm:weighted-pol}, so we can assume without loss of generality that $H=H_1$. By the standard approximation argument, strong convergence of (\ref{eq:poly})  follows from convergence of  (\ref{eq:poly}) for operators of the form $T=\lambda$, $\lambda\in \TT$. So it remains to show that the scalar averages 
\begin{equation}\label{eq:gen-poly}
\aveN e(q(n)+\alpha[p(n)])
\end{equation}
converge for every $\alpha \in \RR$. The function $t\mapsto q(t)+\alpha[p(t)]$ is a so-called generalized polynomial, and convergence of (\ref{eq:gen-poly}) follows from recent results in ergodic theory, see Bergelson, Leibman \cite[Corollary 0.26]{BL}.
\end{proof}

\end{document}